\def \c{\mathbb{C}}
\def \z{\mathbb{Z}}
\def \r{\mathbb{R}}
\def \n{\mathbb{N}}
\def \K{\mathcal{K}}
\def \P{\mathcal{P}}
\def \R{\mathcal{R}}
\def \L{\mathcal{L}}
\def \Gr{\textup{Gr}}
\def \GL{\textup{GL}}
\def \SL{\textup{SL}}
\def \SO{\textup{SO}}
\def \SP{\textup{SP}}
\def \Spec{\textup{Spec}}
\def \Vol{\textup{Vol}}
\theoremstyle{plain}
\newtheorem{Th}{Theorem}[section]
\newtheorem{Lem}[Th]{Lemma}
\newtheorem{Prop}[Th]{Proposition}
\newtheorem{Cor}[Th]{Corollary}
\theoremstyle{definition}
\newtheorem{Def}[Th]{Definition}
\newtheorem{Rem}[Th]{Remark}
\begin{document}
\title[Semigroup of representations and Kazarnovskii's theorem]
{Moment polytopes, semigroup of representations
and Kazarnovskii's theorem}
\author{Kiumars Kaveh, A. G. Khovanskii}
\date{\today}

\maketitle

\begin{center}
{\it To Stephen Smale, our mathematical hero}
\end{center}

\begin{abstract}
Two representations of a reductive group $G$ are spectrally
equivalent if the same irreducible representations appear in both of
them. The semigroup of finite dimensional representations of $G$
with tensor product and up to spectral equivalence is a rather
complicated object. We show that the Grothendieck group
of this semigroup is more tractable and give a description of it in
terms of moment polytopes of representations. As a corollary, we give a proof of the
Kazarnovskii theorem on the number of solutions in $G$ of a system
$f_1(x) = \cdots = f_m(x) = 0$ where $m=\dim(G)$ and
each $f_i$ is a generic function in the space of matrix elements of a representation
$\pi_i$ of $G$.\\
\end{abstract}

\noindent {\it Key words:} Reductive group representation, 
weight polytope, moment polytope, PRV conjecture, Bernstein-Ku\v{s}nirenko theorem, Grothendieck group.\\

\noindent{\it AMS subject classification:} 05E10, 14L30.\\

\section{Introduction}
To each commutative semigroup $S$ one associates its Grothendieck semigroup $\Gr(S)$
which is a semigroup with cancelation.
We say that two elements $a, b \in S$ are {\it analogous} and write $a \sim b$ if there is $c \in S$ with
$a+c = b+c$ (where we write the semigroup operation additively).
The relation $\sim$ is an equivalence relation and respects the addition.
The set of equivalence classes of $\sim$ together with the induced addition is the
{\it Grothendieck semigroup of $S$} denoted by $\Gr(S)$. There is a natural homomorphism
$\rho: S \to \Gr(S)$ which associates to each element its equivalence class.

The semigroup $\Gr(S)$ has the {\it cancelation property}, i.e. if for
$a, b, c \in \Gr(S)$ we have $a+ c = b+c$ then $a = b$. Moreover,
for any homomorphism $\varphi: S \to H$ where $H$ is a semigroup with cancelation, there exists a
unique homomorphism $\bar{\varphi}: \Gr(S) \to H$ such that $\varphi = \bar{\varphi} \circ \rho$.
In particular, under the homomorphism $\varphi$, analogous elements have the same image.

Any semigroup $H$ with cancelation naturally extends to a group, namely its {\it group of formal differences} which
consists of pairs of elements from $H$ where we consider two pairs $(a, b)$ and $(c,d)$ equal
if $a+d = b+c$. The {\it Grothendieck group of a semigroup $S$} is the group of formal differences of $\Gr(S)$.

The Grothendieck semigroup of $S$ contains significant information about $S$ and often is
simpler to describe than $S$ itself.

In this note we will discuss a description of analogous elements,
the Grothendieck semigroup and the homomorphism $\rho: S \to \Gr(S)$ for the following
two examples of semigroups. The first one is the motivating example for the second
one which is the main contribution of the present paper.\\

\noindent{\bf Example 1:} Let $\K$ be the semigroup of nonempty finite subsets in the
lattice $\z^n$ with respect to the addition of subsets. The semigroup
$\K$  is rather complicated, but its Grothendieck semigroup is easy to describe.
It is isomorphic to the semigroup $\P$ of convex integral
polytopes in $\r^n$ with respect to addition (also called the Minkowski sum).
The homomorphism $\rho$ sends a finite subset $A \subset \z^n$ to its convex hull
$\Delta (A)$.

From the description of the semigroup $\K$ one obtains a simple proof of
the Bernstein-Ku\v{s}nirenko theorem.
The support of a Laurent polynomial $f$ is the finite set of $\alpha \in \z^n$
where $x^\alpha$ appears in $f$ with nonzero coefficient. For a finite nonempty set $A \subset
\z^n$ let $L_A$ denote the subspace of Laurent polynomials whose supports lie in $A$.
Let $A_1, \ldots, A_n \subset \z^n$ be finite nonempty subsets with convex hulls
$\Delta_1, \ldots, \Delta_n$ respectively. The
Bernstein-Ku\v{s}nirenko theorem states that the number of solutions
in $(\c^*)^n$ of a system $f_1(x)=\dots=f_n(x)=0$, where each $f_i$ is a  generic
Laurent  polynomial in $L_{A_i}$, is equal to the mixed volume of the polytopes
$\Delta_1, \ldots, \Delta_n$ multiplied by $n!$
(Sections \ref{sec-Bern-Kush} and \ref{sec-matrix-elements-torus}).\\

\noindent{\bf Example 2:} Let $G$ be a complex connected reductive
algebraic group of dimension $m$. We say that two finite dimensional
representations $\pi_1$, $\pi_2$ of $G$ are {\it spectrally
equivalent} if they have the same $G$-spectrum, i.e. the same
irreducible representations appear in both of them but perhaps with
different multiplicities. Let $\R_{\Spec}(G)$ be the semigroup of
finite dimensional representations of $G$ with respect to tensor
product and up to spectral equivalence. This semigroup is quite
complicated. In Section \ref{sec-main-spectrally-equiv} we describe
the Grothendieck semigroup of this semigroup: let $\z^n$ be the
weight lattice of $G$, i.e. the lattice of characters of a maximal
torus of $G$, and let $\r^n$ be its real span. The Weyl group $W$ of
$G$ is a finite group generated by reflections acting on $\r^n$
preserving the lattice $\z^n$. One fixes a fundamental domain $C$
for the action of $W$ and call it the positive Weyl chamber. Every
finite dimensional irreducible representation $\pi$ is determined by
a finite number of so-called highest weights which are
integral points in the positive Weyl chamber. The convex hull of the
union of the $W$-orbits of these highest weights is called the {\it
weight polytope} of the representation denoted by $\Delta_W(\pi)$.
It is an integral $W$-invariant polytope in $\r^n$.
Also we call the intersection of $\Delta_W(\pi)$ with the positive Weyl
chamber, the {\it moment polytope} of $\pi$ and denote it by $\Delta_W^+(\pi)$.
The main result is Theorem \ref{th-20} which states that the Grothendieck
semigroup of $\R_{\Spec}(G)$ is isomorphic to the semigroup $\P_W$ of integral
$W$-invariant polytopes in $\r^n$ together with the Minkowski sum of
polytopes. The homomorphism $\rho$ sends a representation $\pi$ to
its weight polytope $\Delta_W(\pi)$. Equivalently, one can describe the
Grothendieck semigroup of $\R_{\Spec}(G)$ in terms of the moment polytopes
(Theorem \ref{th-21}).

As in the Bernstein-Ku\v{s}nirenko theorem, Example 2 then allows
us to obtain a simple proof of the Kazarnovskii theorem.
For a representation $\pi$ of $G$ let $L_\pi$ denote its space of matrix elements,
i.e. the subspace spanned by the matrix entries (in some basis) of the representation $\pi$
regarded as regular functions on $G$.
Let $\pi_1, \ldots, \pi_m$ be finite dimensional representations of $G$ with
moment polytopes $\Delta_1=\Delta^+_W(\pi_1), \ldots, \Delta_m=\Delta^+_W(\pi_m)$
respectively. The Kazarnovskii theorem computes the
number of solutions of a system $f_1(x)=\dots=f_m(x)=0$ on the group $G$,
where each $f_i$ is a generic function from the space of matrix
elements $L_{\pi_i}$, in terms of the polytopes $\Delta_i$.
(Sections \ref{sec-Bern-Kush} and \ref{sec-matrix-elements-reductive}).

In Section \ref{sec-G-C} we rewrite the Kazarnovskii formula as the
mixed volume of certain polytopes constructed out of moment polytopes and the
so-called Gelfand-Cetlin polytopes.
To each irreducible representation of a classical group there corresponds its
{\it Gelfand-Cetlin polytope}. Given a representation $\pi$ of a
classical group $G$ one defines the polytope $\tilde{\Delta}(\pi)$ to be the polytope
fibred over the polytope $\Delta_W^+(\pi) = \Delta_W(\pi) \cap C$ and with Gelfand-Cetlin polytopes as fibres.
For the classical groups, the Kazarnovskii theorem can be formulated exactly as the Bernstein-Ku\v{s}nirenko
theorem: the number of solutions of the system in the theorem is equal to the
mixed volume of the polytopes $\tilde{\Delta}(\pi_i)$ multiplied by $m!$ .

The main theorem (Theorem \ref{th-20}) was conjectured by the second
author in the early $90$'s after the Kazarnovskii theorem had appeared (\cite{Kazarnovskii}).
We managed to prove it using the PRV conjecture/theorem which is a deep fact
about tensor product of irreducible representations (Theorem \ref{th-18}).

The weight polytope (or moment polytope) of a representation also plays important role in questions
related to the geometry of the group $G$, its compactifications and its subvarieties. For
some interesting results in this direction see \cite{Kapranov, Valentina1, Valentina2, Timashev}.

The proof of Bernstein-Ku\v{s}nirenko theorem in this note, with some improvements,
is the same as the one in \cite{Askold-sum-of-finite-sets}.
The main theorem (Theorem \ref{th-20}) allows us to extend this argument
to general reductive groups and prove the Kazarnovskii theorem.
Another ingredient in the proof is our intersection theory of
finite dimensional subspaces of rational functions on algebraic varieties (developed in \cite{Askold-Kiumars-MMJ}).
This is briefly reviewed in Section \ref{sec-int-index}.
A crucial step in our proofs is a description of the {\it completion} of a subspace $L_\pi$
of matrix elements of a representation $\pi$ (Theorem \ref{th-13} and Theorem \ref{th-K-mat}). 
This is a direct corollary of the description of Grothendieck semigroup of subspaces of matrix elements 
(Proposition \ref{prop-1}). 

The Ku\v{s}nirenko theorem is a particular case of the Bernstein-Kushnirenko theorem
when all the Newton polytopes of equations are the same. Today,
several generalizations of the Ku\v{s}nirenko theorem are known (see \cite{Brion} for spherical
varieties, and \cite{Askold-Kiumars-Newton-Okounkov} for arbitrary varieties).
On the other hand, the Bernstein--Kushnirenko theorem is
harder to generalize (see \cite{Askold-Kiumars-reductive}). Its most important generalization
is the Kazarnovskii theorem which we discuss in this note.

\section{Intersection theory of finite dimensional subspaces of regular functions} \label{sec-int-index}
Let $X$ be a complex $n$-dimensional irreducible normal affine variety with $\c[X]$ its ring of regular
functions. Consider the collection $K[X]$ of all nonzero finite dimensional subspaces of
$\c[X]$. The {\it product} of two subspaces $L_1, L_2 \in K[X]$ is the subspace spanned by all the
$fg$ where $f \in L_1$, $g \in L_2$. With this product $K[X]$ is a commutative semigroup.

The {\it base locus} $Z(L)$ of a subspace $L \in K[X]$ is the set of all $x \in X$
for which $f(x) = 0$ for any $f \in L$. Let $L_1, \ldots, L_n \in K[X]$ and $Z = \bigcup_i Z(L_i)$.
\begin{Def}
The {\it intersection index} $[L_1, \ldots, L_n]$ is the number of solutions in $X \setminus Z$
of a generic system of equations $f_1 = \cdots = f_n = 0$ where $f_i \in L_i$, $1 \leq i \leq n$.
\end{Def}

One shows that the intersection index is well-defined (i.e. is independent of the choice of a
generic system) \cite{Askold-Kiumars-MMJ}. It is obvious that the intersection index is symmetric with respect
to permuting the subspaces $L_i$. Moreover, the intersection index is linear in each argument.
The linearity in first argument means:
\begin{equation} \label{equ-*}
[L'_1L''_1, L_2,\dots, L_n]= [L'_1, L_2,\dots, L_n] + [L''_1, L_2,\dots,L_n],
\end{equation}
for any $L'_1, L''_1, L_2, \ldots, L_n \in K[X]$.
From (\ref{equ-*}) one sees that for a fixed $(n-2)$-tuple of subspaces
$L_2, \ldots, L_n \in K[X]$, the map $\pi: K[X] \to \r$ given by $\pi(L) = [L, L_2, \ldots, L_n]$
is a homomorphism from the semigroup $K[X]$ to the additive group of integers. The existence of such
a homomorphism shows that the intersection index induces an intersection index on
$\Gr(K[X])$, i.e. {\it the intersection index $[L_1, \ldots, L_n]$ remains invariant if we substitute each
$L_i$ with an analogous subspace $\tilde{L}_i$.}

One can describe the relation of analogous subspaces in a different way as follows (see \cite{Askold-Kiumars-MMJ}).
A rational function $f \in \c(X)$ is called {\it integral over the subspace $L$} if
it satisfies an equation $$f^m+a_1 f^{m-1} + \dots a_0 =0$$ with $m>0$ and $a_i \in L^i$, $1 \leq i \leq m$.
The collection of all the rational functions integral over $L$ forms a finite dimensional subspace
$\overline{L}$ called the {\it completion of $L$}.

\begin{Prop} \label{prop-1}
1) Two subspaces $L_1, L_2 \in K[X]$ are analogous if and only if $\overline{L}_1 = \overline{L}_2$.
2) For any $L \in K[X]$, the completion $\overline{L}$ belongs to $K[X]$ and
is analogous to $L$.
3) Moreover, the completion $\overline{L}$ contains all the subspaces $M \in K[X]$ analogous to $L$.
\end{Prop}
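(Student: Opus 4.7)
My plan is to prove parts 2 and 1 first, then deduce part 3, using two tools: a valuative description of the completion, and the classical finite-generation theorem for integral closures.

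\emph{Step 1 (valuative criterion).} For each discrete $\c$-valuation $v$ on $\c(X)$ with center on some normal projective completion of $X$, set $\mu_L(v) := \min_{g \in L} v(g)$. I would first establish
\[
f \in \overline L \;\Longleftrightarrow\; v(f) \geq \mu_L(v) \text{ for every such } v.
\]
The forward direction is a direct estimate: applying $v$ to the integral equation $f^m + a_1 f^{m-1} + \cdots + a_m = 0$ with $a_i \in L^i$ forces $v(f) \geq \mu_L(v)$. The converse is the classical valuative criterion of integrality applied to the (possibly fractional) ideal of $\c[X]$ generated by $L$. It follows that $\overline L$ is a $\c$-linear subspace, and normality of $X$ places it inside $\c[X]$.

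\emph{Step 2 (finite-dimensionality; the main obstacle).} Consider the graded Rees-type algebra
\[
R_L \;:=\; \bigoplus_{k \geq 0} L^k t^k \;\subset\; \c[X][t],
\]
which is finitely generated over $\c$. A homogeneity argument identifies the $t$-homogeneous degree-$k$ part of the integral closure of $R_L$ (in an appropriate ambient graded ring) with $\overline{L^k}\cdot t^k$. The classical theorem on finite generation of integral closures of finitely generated domains (Noether normalization plus a trace argument) then shows that this graded integral closure is a finitely generated $R_L$-module; in particular each $\overline{L^k}$, and hence $\overline L$, is finite-dimensional. Thus $\overline L \in K[X]$.

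\emph{Step 3 ($L \sim \overline L$).} The graded $R_L$-submodule $S := \bigoplus_{k \geq 0} \overline L^k t^k$ sits inside the integral closure (each basis element of $\overline L\cdot t$ is integral over $R_L$) and is generated, as an $R_L$-module, by a $\c$-basis of $\overline L \cdot t$. Choosing $D$ larger than the degrees of a set of homogeneous generators, a comparison of graded pieces of $S$ yields $L \cdot \overline L^k = \overline L^{k+1}$ for all $k \geq D$. Setting $M := \overline L^D \in K[X]$ gives
\[
L \cdot M \;=\; \overline L^{D+1} \;=\; \overline L \cdot M,
\]
so $L \sim \overline L$, completing part 2.

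\emph{Step 4 (parts 1 and 3).} The ``if'' direction of part 1 is immediate from Step 3 and transitivity. For ``only if'', suppose $L_1 \cdot M = L_2 \cdot M$; for every valuation $v$ as above,
\[
\mu_{L_1}(v) + \mu_M(v) \;=\; \mu_{L_1 M}(v) \;=\; \mu_{L_2 M}(v) \;=\; \mu_{L_2}(v) + \mu_M(v),
\]
whence $\mu_{L_1} = \mu_{L_2}$ and Step 1 yields $\overline L_1 = \overline L_2$. Part 3 is then immediate: any $M \sim L$ has $\overline M = \overline L$ by part 1, and $M \subset \overline M$ follows from the trivial degree-$1$ integral relation $f + (-f) = 0$, valid since $-f \in M$ for the linear subspace $M$. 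The main obstacle throughout is the finite-generation statement in Step 2, which requires the correct choice of ambient graded ring for the integral closure; once that setup is in place, the valuative criterion and the analogy identities are formal consequences.
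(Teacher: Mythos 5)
The paper itself gives no proof of Proposition \ref{prop-1}; it is quoted from \cite{Askold-Kiumars-MMJ}, so your argument has to stand on its own. Its architecture (a valuative description of $\overline L$, the Rees algebra $R_L=\bigoplus_k L^kt^k$, and a graded Noetherian argument giving $L\cdot\overline L^{\,k}=\overline L^{\,k+1}$ for large $k$) is the right one and close in spirit to the cited source. But there are two genuine gaps, both sitting exactly where you flag ``the main obstacle.'' First, the converse in Step 1 is \emph{not} the classical valuative criterion for the ideal $I=L\cdot\c[X]$: that criterion characterizes the set of $f$ with $f^m+a_1f^{m-1}+\cdots+a_m=0$ and $a_i\in I^i=(L\c[X])^i$, which is an ideal of $\c[X]$ and in general infinite-dimensional, whereas $\overline L$ requires $a_i\in L^i$ and is finite-dimensional. (Take $L=\c\cdot 1$ on $X=\c^n$: the integral closure of the unit ideal is all of $\c[X]$, while $\overline L=\c$.) Cutting the ideal-theoretic closure down to $\overline L$ is exactly what the valuations at infinity are for, and proving that $v(f)\ge\mu_L(v)$ for all such $v$ forces an integral equation with coefficients in $L^i$ is logically the same finite-generation statement as your Step 2 (one needs $\bigoplus_k\overline{L^k}\,t^k$ to be a finite $R_L$-module). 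So Steps 1 and 2 cannot be decoupled as written, and the converse of Step 1 --- which you then use in Step 4 for the ``only if'' of part 1, and implicitly for the linearity of $\overline L$ --- is unproved. Second, Noether's finiteness of integral closure applies to the closure of $R_L$ in a \emph{finite} extension of $\textup{Frac}(R_L)$; the extension $\c(X)(t)/\textup{Frac}(R_L)$ is finite only when the Kodaira map of $L$ is generically finite onto its image. The fix is to take the closure in the algebraic closure of $\textup{Frac}(R_L)$ inside $\c(X)(t)$ (finite over $\textup{Frac}(R_L)$, since a subfield of a finitely generated extension of $\c$ is finitely generated) and check the graded pieces are unchanged; you gesture at this but do not carry it out.

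Both gaps are repairable, and partly avoidable. Finite-dimensionality of $\overline L$ already follows from your \emph{forward} valuative estimate: on a normal projective completion $Y\supset X$ one gets $\overline L\subset H^0\bigl(Y,\mathcal{O}_Y(\sum_E c_EE)\bigr)$ with $E$ running over the boundary divisors and $c_E=-\mu_L(\textup{ord}_E)$. The ``only if'' of part 1 and the linearity of $\overline L$ can be obtained purely algebraically by the determinant trick: from $L_1M=L_2M$ one gets $fL_2^kM\subset L_2^{k+1}M$ for $f\in L_1$, so $ft$ is integral over $R_{L_2}$ by Cayley--Hamilton on the finite faithful graded module $\bigoplus_kL_2^kMt^k$, and taking the top homogeneous component of the resulting equation lands the coefficients in $L_2^i$; this bypasses the converse valuative criterion entirely. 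Finally, a slip in Step 3: $S$ is \emph{not} generated as an $R_L$-module by a basis of $\overline L\cdot t$ (that would already assert $\overline L^{\,k+1}=L^k\overline L$); what you need, and what your next sentence in effect uses, is that $S=R_L[\overline L\,t]$ is module-finite over $R_L$ because it is generated as an algebra by finitely many elements integral over $R_L$, after which one picks homogeneous module generators of degree at most $D$.
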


For $L \in K[X]$ define the Hilbert function $H_L$ by $H_L(k) = \dim (\overline{L^k})$.
The following theorem provides a way to compute the self-intersection index of a subspace $L$
(see \cite[Part II]{Askold-Kiumars-Newton-Okounkov}):
\begin{Th} \label{th-intersec-index-Hilbert-function}
For any $L \in K[X]$, the limit $$a(L) = \lim_{k \to \infty} H_L(k)/k^n$$ exists, and
the self-intersection index $[L, \ldots, L]$ is equal to $n! a(L)$.
\end{Th}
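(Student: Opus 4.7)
The plan is to identify $\dim \overline{L^k}$ with the cardinality of the $k$-th slice of a semigroup in $\z_{\geq 0}\times \z^n$ attached to $L$ via a valuation, deduce existence of the limit from a general growth theorem for such semigroups, and finally match the limit to the self-intersection index by passing to a projective model of $L$.

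First I would invoke Proposition \ref{prop-1} to replace each $L^k$ by its completion $\overline{L^k}$ freely, since analogous subspaces have equal intersection indices and $\dim \overline{L^k}$ is exactly the Hilbert function $H_L$ appearing in the statement. I would then assemble the graded subring $A = \bigoplus_{k\geq 0}\overline{L^k}\,t^k$ of $\c(X)[t]$, so that all claims about $H_L$ become claims about the dimensions of its graded pieces.

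Next I would fix a rank-$n$ valuation $\nu : \c(X)^{*} \to \z^n$ with one-dimensional leaves (for instance one coming from a regular system of parameters at a smooth point of $X$). The set
\[
S(L) = \bigl\{(k,\nu(f)) : 0 \neq f \in \overline{L^k},\ k \geq 1\bigr\} \cup \{0\}
\]
is a subsemigroup of $\z_{\geq 0}\times \z^n$, and the one-dimensional-leaf property forces $|S(L)_k| = \dim \overline{L^k}$. Defining the Newton-Okounkov body $\Delta(L) \subset \r^n$ as the closure of the union of the rescaled slices $S(L)_k/k$, the semigroup counting theorem from \cite[Part I]{Askold-Kiumars-Newton-Okounkov} yields $H_L(k)/k^n \to \Vol(\Delta(L))$, giving existence of $a(L)$.

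It remains to show $n!\,a(L) = [L,\ldots,L]$. Here I would resolve the rational Kodaira-type map $X \dashrightarrow \p(\overline{L}^{*})$ to a normal projective variety $Y$, birational to $X$ away from the base locus, equipped with a line bundle $\L$ satisfying $H^0(Y,\L^k) = \overline{L^k}$ for all $k \geq 1$. The classical Hilbert-Samuel theorem on $Y$ then gives $h^0(Y,\L^k) \sim (\L^n/n!)\,k^n$, while a generic system of sections of $\L$ pulls back to a generic system in $L$ whose common zeros outside the base locus are simultaneously counted by $\L^n$ and by $[L,\ldots,L]$. The step I expect to be the main obstacle is precisely this identification $H^0(Y,\L^k) = \overline{L^k}$: one must verify that integrality over $L$, which defines the completion, corresponds exactly to regularity on $Y$, and that no sections are lost either in the base locus or at the boundary. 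This matching is the technical heart of the constructions in \cite{Askold-Kiumars-MMJ, Askold-Kiumars-Newton-Okounkov}.
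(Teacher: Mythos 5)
Your proposal is correct in outline, but it is a hybrid of two routes, only one of which the paper actually uses. The paper's own ``proof'' is a single sentence deferring to \cite[Part II]{Askold-Kiumars-Newton-Okounkov}: one passes to a projective model via the Kodaira map and applies the classical Hilbert theorem on the dimension and degree of a projective subvariety. That single device delivers \emph{both} conclusions at once: the Hilbert polynomial of the relevant graded ring shows that $H_L(k)/k^n$ converges, and its leading coefficient is $\deg/n!$, which is matched to $[L,\ldots,L]$ exactly as in the second half of your argument. Your first half --- the rank-$n$ valuation with one-dimensional leaves, the semigroup $S(L)\subset\z_{\geq 0}\times\z^n$, and the Newton--Okounkov body --- is the method of Part I of that reference; it is a legitimate and more general way to get existence of the limit (it works for graded subalgebras with no projective model), but in the present setting it is redundant, since your projective-model step already yields the asymptotics. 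What the Okounkov-body detour buys is generality and independence from Hilbert's theorem; what the paper's route buys is brevity and the simultaneous identification of the limit with an intersection number.

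One caveat on the step you yourself flag as the technical heart: the clean equality $H^0(Y,\mathcal{L}^k)=\overline{L^k}$ for \emph{all} $k\geq 1$ is stronger than what one can arrange or than what is needed. In general one only gets an inclusion of $\overline{L^k}$ into the sections (integral dependence gives boundedness of poles along the boundary and exceptional divisors), with equality of dimensions holding asymptotically, and that suffices since only the limit $\lim_k H_L(k)/k^n$ enters. Note also that the completion is genuinely essential here and not a convenience: when the Kodaira map $X\dashrightarrow\p(L^*)$ has degree $d>1$ onto its image, $\dim L^k$ itself grows like $\deg(Y_L)k^n/n!$ while $[L,\ldots,L]=d\cdot\deg(Y_L)$, and it is exactly the integral closure $\overline{L^k}$ that recovers the missing factor $d$. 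Your write-up uses $\overline{L^k}$ throughout, so this is handled, but it is worth making explicit that this is why Proposition \ref{prop-1} is invoked at the start rather than as a mere normalization.
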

The proof is based on the Hilbert theorem on the dimension and degree of a subvariety of the projective space.

\section{Mixed volume and mixed integral} \label{sec-mixed-vol}
A function $F: \L \to \r$ on a (possibly infinite dimensional) linear space $\L$ is called
a homogeneous polynomial of degree $k$ if its restriction to any finite dimensional subspace of $\L$
is a homogeneous polynomial of degree $k$. (For any $k$, the zero constant function is a homogeneous
polynomial of degree $k$.)

\begin{Def}
To a symmetric multi-linear function $B(v_1, \ldots, v_k)$, $v_i \in \L$ one corresponds a homogeneous polynomial of
degree $k$ on $\L$ by $P(v) = B(v, \ldots, v)$. We say that symmetric form $B$ is a
{\it polarization of the homogeneous polynomial $P$}.
\end{Def}

If $F$ is a homogeneous polynomial of degree $k$, then its derivative $F'_v(x)$ in the direction of a
vector $v$ is linear in $v$ and homogeneous of degree $k-1$ in $x$. Let $v_1, \ldots, v_k$ be
an $k$-tuple of vectors. With each $x$, the $k$-th derivative $F^{(k)}_{v_1, \ldots, v_k}(x)$
is a symmetric multi-linear function in the $v_i$. One easily verifies the following:

\begin{Prop}
Any homogeneous polynomial of degree $k$ has a unique {\it polarization} $B$ defined by the
formula: $$B(v_1,\dots,v_k)= (1/k!) F^{(k)}_{v_1,\dots,v_k}.$$
\end{Prop}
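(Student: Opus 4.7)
The plan is to prove this by the standard polarization argument, reducing to the finite-dimensional setting. Since by definition a homogeneous polynomial on $\L$ is determined by its restrictions to finite-dimensional subspaces, and both sides of the proposed formula are defined pointwise in the $v_i$, I can fix any finite-dimensional subspace containing the $v_i$'s and work there, using ordinary multivariable calculus.

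For existence, I would take the proposed formula as a definition: set $B(v_1, \ldots, v_k) = (1/k!)\, F^{(k)}_{v_1, \ldots, v_k}$. Three things need checking. First, $B$ is multilinear because each iterated directional derivative is linear in its direction. Second, $B$ is symmetric: this is equality of mixed partials applied to the formal Taylor expansion of $F(x + t_1 v_1 + \cdots + t_k v_k)$, where the coefficient of $t_1 \cdots t_k$ must be symmetric. Third, the value is independent of the base point $x$ (so that $B$ is a genuine form and not a function of $x$): the $k$-th directional derivative of a homogeneous polynomial of degree $k$ is homogeneous of degree $0$ in $x$, hence constant. Finally, to verify $B(v, \ldots, v) = F(v)$ I would use the Taylor expansion $F(tv) = \sum_{j=0}^{k} (t^j/j!)\, F^{(j)}_{v,\dots,v}(0)$; since $F(tv) = t^k F(v)$, matching the coefficient of $t^k$ gives $F(v) = (1/k!)\, F^{(k)}_{v,\ldots,v}(0) = B(v,\ldots,v)$.

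For uniqueness, suppose $B_1$ and $B_2$ are two symmetric $k$-linear forms satisfying $B_i(v,\ldots,v) = F(v)$. Applying each to a linear combination and expanding by multilinearity and symmetry, I get
\begin{equation*}
F(t_1 v_1 + \cdots + t_k v_k) = \sum_{j_1 + \cdots + j_k = k} \binom{k}{j_1, \ldots, j_k} t_1^{j_1} \cdots t_k^{j_k}\, B_i(\underbrace{v_1, \ldots, v_1}_{j_1}, \ldots, \underbrace{v_k, \ldots, v_k}_{j_k}).
\end{equation*}
Extracting the coefficient of $t_1 t_2 \cdots t_k$ on both sides yields $k!\, B_1(v_1, \ldots, v_k) = k!\, B_2(v_1, \ldots, v_k)$, proving uniqueness.

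There is no real obstacle here; the proposition is essentially a standard fact about symmetric multilinear algebra. The only mild point to be careful about is ensuring that $F^{(k)}_{v_1,\ldots,v_k}(x)$ really is independent of $x$ for $F$ of degree exactly $k$, which follows from homogeneity, and that the restriction-to-finite-dimensional-subspaces definition of ``homogeneous polynomial'' on $\L$ makes all the pointwise computations legitimate.
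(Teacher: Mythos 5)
Your proof is correct and is precisely the standard polarization argument that the paper has in mind when it says ``One easily verifies the following'' (the paper itself omits the proof entirely). All the steps --- reduction to finite-dimensional subspaces, base-point independence of the $k$-th derivative by homogeneity of degree $0$, the Taylor-expansion check that $B(v,\dots,v)=F(v)$, and uniqueness by extracting the coefficient of $t_1\cdots t_k$ --- are sound.
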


A compact convex subset of $\r^n$ is called a {\it convex body}.
Consider the collection of convex bodies in $\r^n$. We have two
operations of Minkowski sum and multiplication by a non-negative
scalar for the convex bodies. The collection of convex bodies with
Minkowski addition is a semigroup with cancelation. The
multiplication by a non-negative scalar is associative and
distributive with respect to the Minkowski addition. These
properties allow us to extend the collection of convex bodies to the
(infinite dimensional) linear space $\L$ of {\it virtual convex
bodies} consisting of formal differences of convex bodies (see
\cite{Burago-Zalgaller}).

Let $d\mu = dx_1 \cdots dx_n$ be the standard  Euclidean measure in $\r^n$.
For each convex body $\Delta \subset \r^n$ let $\Vol(\Delta) = \int_\Delta d\mu$ be its volume.
The following statement is well-known:

\begin{Prop}
The function $\Vol$ has a unique extension to the linear space $\L$ of virtual convex bodies as a
homogeneous polynomial of degree $n$.
\end{Prop}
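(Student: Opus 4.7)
The plan is to reduce the statement to Minkowski's classical polynomiality theorem for volumes of Minkowski sums, and then use the uniqueness of polarization to extend to virtual bodies.

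First, I would invoke Minkowski's theorem: for any convex bodies $\Delta_1, \ldots, \Delta_k \subset \r^n$ and non-negative reals $\lambda_1, \ldots, \lambda_k$, the function $(\lambda_1, \ldots, \lambda_k) \mapsto \Vol(\lambda_1 \Delta_1 + \cdots + \lambda_k \Delta_k)$ is a homogeneous polynomial of degree $n$ in the $\lambda_i$. From this, one extracts the \emph{mixed volume} $V(\Delta_1, \ldots, \Delta_n)$ as $(1/n!)$ times the coefficient of $\lambda_1 \cdots \lambda_n$ in the polynomial $\Vol(\lambda_1 \Delta_1 + \cdots + \lambda_n \Delta_n)$. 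The function $V$ is symmetric in its arguments and, by polynomiality, is additive with respect to Minkowski sum in each argument (with the other arguments fixed to convex bodies), and positively homogeneous of degree one in each argument.

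Next, I would extend $V$ to a symmetric multilinear function $\tilde{V}$ on the linear space $\L$ of virtual convex bodies. Since every element of $\L$ is a formal difference $\Delta' - \Delta''$ of convex bodies and $\L$ is, by definition, the Grothendieck group of the cancellative semigroup of convex bodies, the additivity and homogeneity of $V$ on the cone of convex bodies force a unique multilinear (and symmetric) extension $\tilde V : \L^n \to \r$. Setting
\[
\widetilde{\Vol}(v) \;=\; \tilde{V}(v, v, \ldots, v), \qquad v \in \L,
\]
produces, by the definition of a homogeneous polynomial used in this section, a homogeneous polynomial of degree $n$ on $\L$; and it visibly restricts to $\Vol$ on convex bodies because $\tilde{V}(\Delta,\ldots,\Delta) = \Vol(\Delta)$ by Minkowski's theorem applied with a single body.

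For uniqueness, suppose $F$ is any homogeneous polynomial of degree $n$ on $\L$ extending $\Vol$. By the preceding proposition on polarizations, $F$ is determined by its unique symmetric multilinear polarization $B$, and $B$ is in turn determined by its values on $n$-tuples of convex bodies (since every virtual body is a difference of two convex bodies, multilinearity propagates the values). On convex bodies, $B(\Delta_1,\ldots,\Delta_n)$ must coincide with the coefficient extraction from $F(\lambda_1 \Delta_1 + \cdots + \lambda_n \Delta_n) = \Vol(\lambda_1 \Delta_1 + \cdots + \lambda_n \Delta_n)$, hence $B = V$ on convex bodies and therefore $B = \tilde V$ on all of $\L$. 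Thus $F = \widetilde{\Vol}$.

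The main obstacle is Minkowski's polynomiality theorem itself, which is the non-trivial classical input; the rest is essentially formal manipulation with polarizations and the universal property of the Grothendieck group $\L$. A minor technical check is that the multilinear extension of $V$ is well defined on formal differences, which follows directly from the cancellation property of the semigroup of convex bodies under Minkowski sum (stated just above in the excerpt).
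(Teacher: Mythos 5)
The paper does not actually prove this proposition: it is stated as ``well-known,'' with the construction of the space $\L$ of virtual convex bodies referred to Burago--Zalgaller, so there is no internal argument to compare against. Your proof supplies the standard argument and it is correct: Minkowski's polynomiality theorem is the genuine input, the mixed volume $V$ extracted from it is symmetric, Minkowski-additive and positively homogeneous in each slot, and the cancellation property lets you push $V$ through the Grothendieck-group construction to a symmetric multilinear form on $\L$ whose diagonal is the desired extension; uniqueness then follows from the polarization proposition stated just above in the paper. One small point worth making explicit in the uniqueness step: when you claim that $B(\Delta_1,\ldots,\Delta_n)$ is forced by the values of $F$ on convex bodies, the identity $F(\lambda_1\Delta_1+\cdots+\lambda_n\Delta_n)=\Vol(\lambda_1\Delta_1+\cdots+\lambda_n\Delta_n)$ is only available for $\lambda_i\ge 0$, so you should say that the restriction of $F$ to the (finite-dimensional) span of $\Delta_1,\ldots,\Delta_n$ is a polynomial function of $(\lambda_1,\ldots,\lambda_n)$ and is therefore determined by its values on the positive orthant, which has nonempty interior. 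With that sentence added the argument is complete; everything else is the routine bookkeeping you already flag.
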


\begin{Def}
The {\it mixed volume} $V(\Delta_1, \ldots, \Delta_n)$ of the convex bodies $\Delta_i$
is the value of the polarization of the volume polynomial $\Vol$ at $(\Delta_1, \ldots, \Delta_n)$.
\end{Def}

Fix a homogeneous polynomial $F$ of degree $p$ in $\r^n$. Let
$IF(\Delta) = \int_\Delta F d\mu$ denote the integral of $F$ on $\Delta$.
One has the following (see for example \cite{Kh-P}):

\begin{Prop}
The function $IF$ has a unique extension to the linear space $\L$ of virtual convex bodies as
a homogeneous polynomial of degree $n+p$.
\end{Prop}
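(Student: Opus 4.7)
The plan is to mimic the classical argument that gives $\Vol$ a unique extension to $\L$ as a homogeneous polynomial of degree $n$ (the case $p=0$, $F\equiv 1$), proceeding in two stages: first prove polynomiality of $IF$ along Minkowski sums with non-negative coefficients, then use polarization to pass to virtual bodies.

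\textbf{Stage 1 (polynomiality).} I would show that for any convex bodies $\Delta_1, \ldots, \Delta_r \subset \r^n$, the function
$$P(\lambda_1, \ldots, \lambda_r) = IF(\lambda_1 \Delta_1 + \cdots + \lambda_r \Delta_r)$$
is a polynomial of total degree $n+p$ on $\r_{\geq 0}^r$. Homogeneity $IF(t\Delta) = t^{n+p} IF(\Delta)$ for $t>0$ follows from the substitution $y = x/t$, so $P$ is homogeneous of degree $n+p$; the content is polynomiality. By continuity of $IF$ and of the Minkowski sum in the Hausdorff metric, together with density of polytopes in convex bodies, it suffices to treat the case when all the $\Delta_i$ are polytopes. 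For $\lambda \in \r_{>0}^r$ the normal fan of $\sum \lambda_i \Delta_i$ is the common refinement of the normal fans of the $\Delta_i$, so the combinatorial type of $\sum \lambda_i \Delta_i$ is constant in $\lambda$, and each of its vertices has the form $\sum \lambda_i v_i$ for a fixed tuple $(v_i)$ of vertices of $\Delta_i$. Using this combinatorial data I would triangulate $\sum \lambda_i \Delta_i$ into simplices $S(\lambda)$ whose vertices are linear in $\lambda$. On each such simplex, changing variables to the standard $n$-simplex produces a Jacobian which is a polynomial of degree $n$ in $\lambda$, and the pullback of $F$ is polynomial of degree $p$ in $\lambda$ (at each fixed point of the standard simplex). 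Integrating and summing over the finitely many simplices shows $P$ is polynomial of degree $\leq n+p$ on $\r_{>0}^r$; continuity extends this to $\r_{\geq 0}^r$.

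\textbf{Stage 2 (extension to $\L$).} Once $P$ is known to be polynomial of degree $n+p$, polarization produces a unique symmetric $(n+p)$-linear function $MIF$ on the cone of convex bodies (the \emph{mixed integral}) with $MIF(\Delta, \ldots, \Delta) = IF(\Delta)$. Since every virtual body $A \in \L$ is a formal difference of two honest convex bodies, multilinearity and symmetry force a unique extension of $MIF$ to a symmetric $(n+p)$-linear form on the whole linear space $\L$, and setting $IF(A) := MIF(A, \ldots, A)$ yields a homogeneous polynomial of degree $n+p$ on $\L$ restricting to the original $IF$. Uniqueness of the extension follows because any homogeneous polynomial of degree $n+p$ on $\L$ that agrees with $IF$ on the cone of convex bodies is determined by its polarization, which must coincide with $MIF$.

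\textbf{Main obstacle.} The substantive step is Stage 1 and, within it, the polytope case: the argument requires that one can triangulate $\sum \lambda_i \Delta_i$ so that the combinatorial structure and the orientations of the simplices are $\lambda$-independent on $\r_{>0}^r$, with vertex positions depending linearly on $\lambda$. Once this is set up, the simplex integral is a direct change-of-variables calculation, and Stage 2 is formal.
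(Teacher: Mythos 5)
Your argument is correct, but it is not the route the paper takes: the paper states this proposition without proof, referring to Pukhlikov--Khovanskii \cite{Kh-P}, where the result is a special case of a general theorem on \emph{polynomial finitely additive measures} (valuations) on polytopes --- one observes that $\Delta \mapsto \int_\Delta F\,d\mu$ is a valuation which is polynomial of degree $p$ under translations, and the general theory then yields a canonical polynomial extension of degree $n+p$ to the space of virtual bodies. Your proof instead generalizes the classical hands-on argument for $\Vol$: reduce to polytopes by continuity, fix the common refinement of normal fans so that the combinatorial type of $\sum\lambda_i\Delta_i$ is constant on the open orthant with vertices linear in $\lambda$, triangulate compatibly, and compute each simplex integral by an affine change of variables whose Jacobian is a degree-$n$ polynomial in $\lambda$ while the pullback of $F$ contributes degree $p$; Stage 2 (polarization and extension to formal differences, with uniqueness because the polarization identity only evaluates $IF$ at genuine Minkowski sums) is formal and correct. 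The one point you rightly flag as the substantive obstacle --- a $\lambda$-independent triangulation with nondegenerate, consistently oriented simplices on all of $\r_{>0}^r$ --- is handled exactly as in the volume case via the notion of strongly isomorphic (analogous) polytopes, and the nonvanishing of the Jacobians on the connected set $\r_{>0}^r$ fixes the signs; so the gap is fillable by a standard reference rather than a new idea. The trade-off is the usual one: your argument is elementary and self-contained but requires this combinatorial bookkeeping, while the valuation-theoretic approach of \cite{Kh-P} is cleaner and applies uniformly to a much larger class of integrands (any polynomial measure), at the cost of invoking the machinery of virtual polytopes.
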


\begin{Def}
The mixed integral $IF(\Delta_1, \ldots, \Delta_{n+p})$ of a homogeneous polynomial $F$ over the bodies
$\Delta_1, \ldots, \Delta_{n+p}$ is the value of the polarization of the
polynomial $IF$ at the bodies $\Delta_1, \ldots, \Delta_{n+p}$.
\end{Def}
From definition, the mixed integral of the constant polynomial $F \equiv 1$ is the mixed volume.

\section{Theorems of Bernstein-Ku\v{s}nirenko and Kazarnovskii} \label{sec-Bern-Kush}
We start with the Bernstein-Ku\v{s}nirenko theorem on the number of solutions of a system of Laurent
polynomials. The characters $x^k = x_1^{k_1} \cdots x_n^{k_n}$ of the (multiplicative)
group $(\c^*)^n$ are in one-to-one correspondence with the points $k = (k_1, \ldots, k_n)$ in the lattice $\z^n$.
A {\it Laurent polynomial} is a regular function in $(\c^*)^n$. Any Laurent polynomial $f$  is a
linear combination of the characters, namely $f = \sum_k c_k x^k$. The {\it support} of a Laurent polynomial $f$
is the set of $k \in \z^n$ with $c_k \neq 0$. For each nonempty set $A \subset \z^n$ one defines
the linear subspace $L_A$ to be collection of all the Laurent polynomials whose supports are contained in $A$.\\

\noindent {\bf Problem:} Given an $n$-tuple of finite nonempty subsets $A_1, \ldots, A_n \subset \z^n$
find the intersection index $[L_{A_1}, \ldots, L_{A_n}]$ in $(\c^*)^n$.

The Bernstein-Ku\v{s}nirenko theorem answers this problem completely \linebreak 
(\cite{Kushnirenko} and \cite{Bernstein}).

\begin{Def}
The {\it Newton polytope} $\Delta(A)$ of a nonempty finite subset $A \in \z^n$ is the
convex hull of $A$.
\end{Def}

\begin{Th}[Bernstein-Ku\v{s}nirenko]
The intersection index $[L_{A_1}, \ldots, L_{A_n}]$ is equal to $$n! V(\Delta(A_1), \ldots, \Delta(A_n)).$$
\end{Th}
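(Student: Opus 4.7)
The plan is to deduce the theorem from the self-intersection case by a polarization argument, and to compute the self-intersection index via the Hilbert function formula of Theorem \ref{th-intersec-index-Hilbert-function}. Observe first that for finite subsets $A, B \subset \z^n$ one has $L_A \cdot L_B = L_{A+B}$, where $A+B$ is the Minkowski sum of sets. Hence the map $A \mapsto L_A$ is a semigroup homomorphism from $\K$ into $K[(\c^*)^n]$. Since the intersection index $[\,\cdot\,,L_2,\dots,L_n]$ is additive in its first argument, it descends to the Grothendieck semigroup $\Gr(\K)$, which by Example 1 is the semigroup of integral convex polytopes in $\r^n$ with Minkowski addition. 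So $[L_{A_1},\dots,L_{A_n}]$ depends on the $A_i$ only through their convex hulls $\Delta(A_i)$, and defines a symmetric multilinear function on integral polytopes.

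Both $[L_{A_1},\dots,L_{A_n}]$ and $n!\,V(\Delta(A_1),\dots,\Delta(A_n))$ are symmetric, multilinear, and homogeneous of degree $n$ in the polytopes $\Delta(A_i)$, in the sense that each comes from the polarization of a homogeneous polynomial of degree $n$ on the cone of integral polytopes. By uniqueness of polarization, it therefore suffices to prove equality on the diagonal, namely
\begin{equation*}
[L_A,\ldots,L_A] \;=\; n!\,\Vol(\Delta(A))
\end{equation*}
for every finite nonempty $A \subset \z^n$.

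To establish this self-intersection formula, I apply Theorem \ref{th-intersec-index-Hilbert-function}, which gives
\begin{equation*}
[L_A,\ldots,L_A] \;=\; n!\,\lim_{k \to \infty} \frac{\dim \overline{L_A^k}}{k^n}.
\end{equation*}
Since $L_A^k = L_{kA}$ and $kA$ is a finite set with convex hull $k\Delta(A)$, the main claim is the identification
\begin{equation*}
\overline{L_{kA}} \;=\; L_{k\Delta(A) \cap \z^n}.
\end{equation*}
The inclusion $\subseteq$ follows because if $f \in \c[(\c^*)^n]$ satisfies an integral equation $f^m + a_1 f^{m-1} + \cdots + a_m = 0$ with $a_i \in L_{ikA}$, then the Newton polytope of $f$ is forced into $k\Delta(A)$ by a standard Newton-polytope-of-a-product argument. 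The inclusion $\supseteq$ is the content-bearing step: one must show that every character $x^\alpha$ with $\alpha \in k\Delta(A) \cap \z^n$ is integral over $L_{kA}$. This follows from the classical fact that the semigroup $\z_{\geq 0}\cdot A$ has the same saturation as the semigroup of lattice points in the cone it spans, yielding explicitly that some power $(x^\alpha)^N$ lies in $L_{kA}^N$, which is an integral dependence of the required form. Granting this identification, we get $\dim \overline{L_A^k} = \#(k\Delta(A)\cap \z^n)$, and the standard asymptotic $\#(k\Delta\cap\z^n)/k^n \to \Vol(\Delta)$ as $k\to\infty$ completes the computation.

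The main obstacle is the nontrivial inclusion $\overline{L_{kA}} \supseteq L_{k\Delta(A)\cap\z^n}$, i.e.\ producing genuine integral dependences for every lattice point in the Newton polytope; the reduction to self-intersection and the asymptotic lattice-point count are formal once this is in hand.
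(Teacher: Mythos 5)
Your proposal is correct, and its skeleton is the paper's: reduce to the self-intersection case $[L_A,\ldots,L_A]=n!\,\Vol(\Delta(A))$ by multilinearity of both sides, then compute the self-intersection index from the asymptotics of $k\mapsto\dim\overline{L_A^k}$ via Theorem \ref{th-intersec-index-Hilbert-function} and the lattice-point count $\#\bigl(k\Delta(A)\cap\z^n\bigr)\sim k^n\Vol(\Delta(A))$. Where you genuinely diverge is in how the key identity $\overline{L_A^k}=L_{(k\Delta(A))_\z}$ is established. The paper obtains it structurally: Khovanskii's finite-set identity $A+n\Delta(A)_\z=(n+1)\Delta(A)_\z$ (Proposition \ref{prop-9}) shows $A$ and $\Delta(A)_\z$ are analogous in $\K$, hence $L_A$ and $L_{\Delta(A)_\z}$ are analogous subspaces, and Proposition \ref{prop-1} converts analogy into equality of completions (Theorems \ref{th-10} and \ref{th-13}). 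You instead verify the two inclusions directly: the inclusion $\subseteq$ by the vertex/leading-term argument on Newton polytopes of products, and the inclusion $\supseteq$ by exhibiting for each $\alpha\in k\Delta(A)\cap\z^n$ an explicit integral dependence $f^N-x^{N\alpha}=0$ for $f=x^\alpha$, valid because $\alpha$ is a rational convex combination of points of the $k$-fold sum $kA$, so $N\alpha$ lies in the $N$-fold Minkowski sum of $kA$ for suitable $N$, i.e.\ $x^{N\alpha}\in L_{kA}^N$. Both routes are sound, and yours even supplies the $\subseteq$ direction that the paper leaves implicit in its appeal to Proposition \ref{prop-1}. The trade-off is that your saturation argument is more elementary and self-contained (no need for Proposition \ref{prop-9} or the Grothendieck-semigroup formalism), whereas the paper's structural route is the one that transports to the reductive-group setting, where the role of your explicit monomial dependences is played by the PRV theorem and no such hands-on substitute is available.
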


Next we discuss the Kazarnovskii theorem. Let $G$ be a complex
connected $m$-dimensional reductive algebraic with a maximal torus
$T \cong (\c^*)^n$. We identify the lattice of characters of $T$ with
$\z^n$ and its real  pan with $\r^n$. The Weyl group $W$ of $G$
(which is a finite group of reflections) acts on $\r^n$ and maps
$\z^n$ to itself. One fixes a polyhedral cone $C$ which is a
fundamental domain for the action of $W$ on $\r^n$ and call it the
{\it positive Weyl chamber}. It is the main result of the highest
weight theory that the finite dimensional irreducible
representations of $G$ are in one-to-one correspondence with the
integral points in the positive Weyl chamber. An integral point in
the positive Weyl chamber is called a {\it dominant weight}. For a
dominant weight $\lambda$ we denote its corresponding finite
dimensional irreducible representation by $V_\lambda$. The point
$\lambda$ is called the {\it highest weight} of the representation
$V_\lambda$.

Consider the left action of the group $G$ on itself. The induced action on the ring of regular
functions $\c[G]$ is given by $g \cdot f(h) = f(g^{-1}h)$ where $g \in G$ and $f \in \c[G]$.
To a function $f \in \c[G]$ we associate the subspace
$L_f$ which is the smallest $G$-invariant subspace of $\c[G]$ containing $f$. One can show that
$L_f$ is finite dimensional (i.e. $\c[G]$ is a so-called {\it rational $G$-module}).

\begin{Def}
The {\it spectrum} $\Spec(f)$ of a function $f \in \c[G]$ is the set of all integral points $\lambda$ for which
$V_\lambda$ appears in the decomposition of $L_f$ into a direct sum of irreducible representations.
\end{Def}

\begin{Def}
For a nonempty subset $A \in C \cap \z^n$ let $L_A$ be the finite dimensional $G$-invariant
subspace of $\c[G]$ consisting of all $f$ with $\Spec(f) \subset A$.
\end{Def}

\noindent {\bf Problem:} Given an $m$-tuple of finite nonempty subsets of
dominant weights $A_1, \ldots, A_m \subset C \cap \z^n$,
find the intersection index $[L_{A_1}, \ldots, L_{A_m}]$ in $G$.

The Kazarnovskii theorem answers this completely (\cite{Kazarnovskii}).
We will need an extra  notation.
According to the {\it Weyl dimension formula} the dimension of an irreducible representation
$V_\lambda$ is equal to $F_W(\lambda)$,
where $F_W$ is a  polynomial on $\r^n$ of degree $(m-n)/2$ defined explicitly in terms of data associated to $W$.
We call $F_W$ the {\it Weyl polynomial of $W$}. We denote the homogeneous component of highest degree of $F_W$ by
$\phi_W$.

\begin{Def}[Weight polytope]
Let $A \subset C \cap \z^n$ be a finite nonempty subset of dominant weights.
The {\it weight polytope $\Delta_W(A)$} is the convex hull of the union of
Weyl orbits of elements of $A$.
\end{Def}

\begin{Th}[Kazarnovskii] \label{th-Kazarnovskii}
Given the finite nonempty subsets $A_1, \ldots, A_m \in C \cap \z^n$
the intersection index
$[L_{A_1}, \ldots, L_{A_m}]$ is equal to the mixed integral $$(m!/\#W) I\phi_W^2(\Delta_W(A_1), \ldots,
 \Delta_W(A_m)),$$
where $\#W$ is the number of elements in the Weyl group $W$.
\end{Th}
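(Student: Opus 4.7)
The plan is to follow the intersection-theoretic strategy used for the Bernstein--Ku\v{s}nirenko theorem, now applied to the group $G$ in place of the torus. Both sides of the identity are multilinear in the $L_{A_i}$ (resp.\ $\Delta_W(A_i)$): the intersection index is multilinear by Section~\ref{sec-int-index}, and the mixed integral is by definition the value of a polarization. Consequently, it is enough to establish the diagonal identity
$$[L_A,\ldots,L_A] \;=\; \frac{m!}{\#W}\,I\phi_W^2(\Delta_W(A))$$
for every finite nonempty $A\subset C\cap\z^n$, after which the full statement follows by polarizing both sides with respect to the Minkowski structure on $W$-invariant polytopes (interpreted via the Grothendieck group of Theorem~\ref{th-20}).

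To attack the self-intersection index, I invoke Theorem~\ref{th-intersec-index-Hilbert-function}, which reduces the problem to computing the asymptotic
$$a(L_A) \;=\; \lim_{k\to\infty}\frac{\dim\overline{L_A^k}}{k^m}.$$
The first major step is to identify the completion $\overline{L_A^k}$. By Proposition~\ref{prop-1}, $\overline{L_A^k}$ is the maximal subspace analogous to $L_A^k$. Using the description of the Grothendieck semigroup of $\R_{\Spec}(G)$ as the semigroup $\P_W$ of integral $W$-invariant polytopes (Theorem~\ref{th-20}), together with the description of the completion of a matrix-element subspace (Theorems~\ref{th-13} and~\ref{th-K-mat}), I expect the conclusion
$$\overline{L_A^k} \;=\; \bigoplus_{\lambda \in k\Delta_W(A)\cap C\cap\z^n} V_\lambda^*\otimes V_\lambda,$$
viewed as a $G\times G$-submodule of $\c[G]$ under the Peter--Weyl decomposition. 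The point is that weights of a tensor product are sums of weights, so the weight polytope of $\pi_A^{\otimes k}$ is $k\Delta_W(A)$, and hence $\overline{L_A^k}$ collects precisely those isotypic components $V_\lambda^*\otimes V_\lambda$ with $\lambda$ an integral dominant point in $k\Delta_W(A)$.

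Granting this identification, the Weyl dimension formula $\dim V_\lambda = F_W(\lambda)$ gives
$$\dim\overline{L_A^k} \;=\; \sum_{\lambda \in k\Delta_W^+(A)\cap\z^n} F_W(\lambda)^2.$$
A Riemann-sum approximation turns this into $\int_{k\Delta_W^+(A)}F_W^2\,d\mu$ up to lower-order error, and only the top-degree homogeneous component $\phi_W^2$ of $F_W^2$ (which has total degree $m-n$) contributes to the coefficient of $k^m$. After rescaling $x=ky$ one obtains
$$\dim\overline{L_A^k} \;=\; k^m\int_{\Delta_W^+(A)}\phi_W^2\,d\mu \;+\; O(k^{m-1}).$$
Since $\phi_W$ is $W$-anti-invariant (its linear factors run over the positive roots, each picking up a sign under a simple reflection), the square $\phi_W^2$ is $W$-invariant, so $\int_{\Delta_W^+(A)}\phi_W^2\,d\mu = (1/\#W)\,I\phi_W^2(\Delta_W(A))$. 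Combined with $[L_A,\ldots,L_A]=m!\,a(L_A)$ this settles the diagonal case, and polarizing in $A$ yields the mixed statement.

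The main obstacle is the identification of the completion $\overline{L_A^k}$: it bundles together the main result of the paper (Theorem~\ref{th-20}), which itself rests on the PRV theorem (Theorem~\ref{th-18}), together with the passage from spectral analogy of representations to analogy of matrix-element subspaces (Theorems~\ref{th-13} and~\ref{th-K-mat}). Once those are in place, the remaining asymptotic analysis is a routine lattice-point count and the polarization step is purely formal.
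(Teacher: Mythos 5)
Your proposal is correct and follows essentially the same route as the paper: reduce to the diagonal case by multilinearity, identify $\overline{L_A^k}=L_{B_k}$ with $B_k=(k\Delta_W^+(A))_\z$ via Theorem \ref{th-K-mat} (resting on the PRV theorem through Theorem \ref{th-20}), compute $\dim L_{B_k}=\sum_{\lambda\in B_k}F_W^2(\lambda)\sim k^m\int_{\Delta_W^+(A)}\phi_W^2\,d\mu$, and conclude by Theorem \ref{th-intersec-index-Hilbert-function} and the $W$-invariance of $\phi_W^2$. This is exactly the paper's Lemma \ref{lem-28} plus the polarization step, with your Peter--Weyl description of $\overline{L_A^k}$ matching the paper's Proposition \ref{prop-24}.
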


\begin{Def}[Moment polytope]
Let $A$ be a finite nonempty subset of $C \cap \z^n$.
The {\it moment polytope} $\Delta^+_W(A)$ is the intersection of $\Delta_W(A)$ with the positive Weyl chamber.
\end{Def}

\begin{Rem}
\noindent 1) Note that contrary to the weight polytope, the moment polytope $\Delta^+_W(A)$ is not
necessarily an integral polytope.\\

\noindent 2) The polytope $\Delta^+_W(A)$ can be identified with the moment (or Kirwan) polytope of
a certain compactification of the group $G$ as a $K \times K$-Hamiltonian space, where $K$ is a
maximal compact subgroup of $G$. This justifies the term moment polytope.
\end{Rem}

\begin{Cor}[Alternative statement of the Kazarnovskii theorem]
Given the finite nonempty subsets $A_1, \ldots, A_m \in C \cap \z^n$
the intersection index  $[L_{A_1}, \ldots, L_{A_m}]$ is equal to the mixed integral
$$m! I\phi_W^2(\Delta^+_W(A_1), \ldots, \Delta^+_W(A_m)).$$
\end{Cor}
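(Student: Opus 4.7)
The corollary will follow from Theorem~\ref{th-Kazarnovskii} once one establishes the purely convex-geometric identity
$$I\phi_W^2(\Delta_W(A_1), \ldots, \Delta_W(A_m)) \;=\; \#W \cdot I\phi_W^2(\Delta^+_W(A_1), \ldots, \Delta^+_W(A_m)).$$
A preliminary observation is that $\phi_W^2$ is $W$-invariant. Up to a positive constant, $\phi_W(\lambda) = \prod_{\alpha>0} \langle \lambda,\alpha\rangle$ is the top-degree part of the Weyl dimension polynomial, i.e.\ the leading homogeneous term of the Weyl denominator; the Weyl denominator is $W$-anti-invariant (it transforms by the sign character), so $\phi_W$ is anti-invariant and $\phi_W^2$ is genuinely $W$-invariant.

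The core step is the diagonal version of the identity: for any $W$-invariant convex body $\Delta \subset \r^n$,
$$\int_{\Delta} \phi_W^2 \, d\mu \;=\; \#W \int_{\Delta \cap C} \phi_W^2 \, d\mu.$$
This is immediate from the tiling $\Delta = \bigcup_{w \in W} w \cdot (\Delta \cap C)$, in which distinct pieces meet only along Weyl chamber walls (a set of Lebesgue measure zero), combined with $W$-invariance of both $\phi_W^2$ and $d\mu$ and the change-of-variables formula.

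To promote this to a statement about mixed integrals, I would fix $t_1, \ldots, t_m \geq 0$ and apply the diagonal identity to the $W$-invariant polytope $\Delta(t) = \sum_i t_i \Delta_W(A_i)$. The key lemma to prove is that
$$\Delta(t) \cap C \;=\; \sum_{i=1}^m t_i \, \Delta^+_W(A_i).$$
The inclusion $\supseteq$ is immediate because each $\Delta^+_W(A_i) \subseteq C$ and $C$ is a convex cone. The reverse inclusion --- that every point of $\Delta(t)$ lying in $C$ can actually be written as a $t$-weighted sum of points already in $C$ --- is the substantive content, a convex-geometric companion of the PRV/Heckman circle of results on $W$-orbit polytopes. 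Granting this lemma, both sides of the target identity are polynomials in $(t_1, \ldots, t_m)$ of total degree $m$ that agree on the positive orthant, hence agree identically; polarizing yields the mixed-integral identity, and Theorem~\ref{th-Kazarnovskii} concludes. The main obstacle is the reverse inclusion in the key lemma; everything else is a short symmetry computation.
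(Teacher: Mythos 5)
Your argument is, in substance, the paper's own. The published proof of this corollary is the single sentence that it follows from Theorem \ref{th-Kazarnovskii} and the $W$-invariance of $\phi_W^2$; your diagonal identity $\int_{\Delta}\phi_W^2\,d\mu=\#W\int_{\Delta\cap C}\phi_W^2\,d\mu$ (via the tiling of $\r^n$ by the chambers $wC$, which overlap in measure zero) together with the polarization step is exactly what that sentence is eliding, and all of those steps in your write-up are correct. The one point you leave open, namely
$$\Bigl(\sum_i t_i\Delta_W(A_i)\Bigr)\cap C=\sum_i t_i\,\Delta^+_W(A_i),$$
is not an obstacle you have introduced: since $(t\Delta)\cap C=t(\Delta\cap C)$ for the cone $C$, it is precisely the assertion, recorded in the paper immediately before Theorem \ref{th-21}, that $\Delta\mapsto\Delta\cap C$ is a semigroup isomorphism from $\P_W$ onto $\P^+_W$. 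The paper states this without proof (``it is easy to see''), so relative to the paper's own toolkit your proposal is complete, and you have correctly located where the genuine convex-geometric content of the corollary sits.

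Since you flag the reverse inclusion as the main obstacle: it is a true and known fact, but it does require an argument. One standard route reduces, by writing each $W$-invariant body as the union of the orbit polytopes $\textup{conv}(Wy)$ over its dominant points $y$, to the case of two orbit polytopes, where the claim follows from the elementary fact that for any $a,b$ the dominant representative of $a+b$ is dominated (in the sense that the difference lies in the cone spanned by the positive roots) by the sum of the dominant representatives of $a$ and $b$ --- a convex-geometric shadow of the PRV statement (Theorem \ref{th-18}) that the paper uses elsewhere. Alternatively one can run a reflection ``straightening'': if $x=a+b\in C$ with $a\in\Delta_1$, $b\in\Delta_2$ and $a$ fails to be dominant for a simple root $\alpha$, dominance of $x$ lets you replace $a$ by $s_\alpha a\in\Delta_1$ and $b$ by a point of the segment from $b$ to $s_\alpha b$, preserving the sum; one then checks this process can be made to terminate with both summands in $C$. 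Either way, with that lemma supplied your proof is correct and coincides with the paper's intended argument, only written out in full.
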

\begin{proof}
The corollary follows from the Kazarnovskii theorem (Theorem \ref{th-Kazarnovskii}) and the invariance of
polynomial $\phi_W^2$ under the Weyl group action.
\end{proof}

\begin{Rem}
\noindent 1) The Bernstein-Ku\v{s}nirenko theorem is a particular case of the Kazarnovskii theorem for $G = (\c^*)^n$.
For $G = (\c^*)^n$, the Weyl group is trivial and the positive Weyl chamber is the whole $\r^n$. For each
$\lambda \in \z^n$, the representation $V_\lambda$ is the $1$-dimensional space on which $(\c^*)^n$ acts by
multiplication via the character $\lambda$.\\

\noindent 2) Any subspace $L_A$ is $G$-invariant, so its base locus $Z(L_A)$ is also $G$-invariant and hence is
empty or the whole $G$. But since $L_A$ is nonzero its base locus should be empty.
Thus the Kazarnovskii theorem computes the
number of solutions of a generic system of equations $f_1 = \cdots f_m = 0$ where $f_i \in L_{A_i}$,
in the whole group $G$ (rather than $G \setminus Z$).\\

\noindent 3) For a classical group $G$ the formula in
Kazarnovskii theorem can be rewritten in terms of the mixed volume
of certain polytopes (see Section \ref{sec-G-C}).
\end{Rem}

\section{Semigroup of finite sets with respect to addition} \label{sec-finite-sets}
There is an addition operation on the collection of subsets of $\r^n$. The sum of two sets $A$ and $B$ is the set
$A+B = \{a+b \mid a \in A,~ b \in B\}$. One verifies that the sum of two convex bodies (respectively convex
integral polytopes) is again a convex body (respectively a convex integral polytope).
This is the well-known Minkowski sum of convex bodies. Consider the following:
\begin{itemize}
\item[-] $\K$, the semigroup of all finite subsets of $\z^n$ with the addition of subset.
\item[-] $\P$, the semigroup of all convex integral polytopes with the Minkowski sum.
\end{itemize}

\begin{Prop} \label{prop-7}
The semigroup $\P$ has cancelation property.
\end{Prop}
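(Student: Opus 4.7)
The plan is to use the classical theory of support functions, which converts Minkowski addition of convex bodies into pointwise addition of real-valued functions, where cancellation is obvious. To each nonempty compact convex set $K \subset \r^n$ we attach its \emph{support function} $h_K: \r^n \to \r$ defined by $h_K(u) = \max_{x \in K} \langle u, x\rangle$. Two standard properties of this construction drive the proof: first, $h_K$ determines $K$, because $K$ is closed and convex and hence equals the intersection of its supporting half-spaces, $K = \{x \in \r^n : \langle u, x \rangle \leq h_K(u) \textrm{ for all } u \in \r^n\}$ (a direct application of the hyperplane separation theorem); second, the support function is Minkowski-additive, $h_{K+L}(u) = h_K(u) + h_L(u)$, which is immediate from the bilinearity of $\langle\cdot,\cdot\rangle$ and the observation that the maximum of a sum of linear functionals on $K$ and $L$ is attained separately on each factor.

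With these two ingredients the proof of Proposition \ref{prop-7} is formal. Suppose $\Delta_1, \Delta_2, \Delta_3 \in \P$ satisfy $\Delta_1 + \Delta_3 = \Delta_2 + \Delta_3$. Applying the support function to both sides and using additivity gives $h_{\Delta_1} + h_{\Delta_3} = h_{\Delta_2} + h_{\Delta_3}$ as real-valued functions on $\r^n$. Since the values are ordinary real numbers we may subtract $h_{\Delta_3}(u)$ pointwise in $u$, obtaining $h_{\Delta_1} = h_{\Delta_2}$, and the first property then forces $\Delta_1 = \Delta_2$.

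Two observations on where the work lies. First, the integrality of the polytopes plays no role: cancellation holds for arbitrary convex bodies, and $\P$ inherits it as a subsemigroup of the semigroup of convex bodies; this dovetails with the fact (recalled in Section \ref{sec-mixed-vol}) that convex bodies already embed into the linear space $\L$ of virtual convex bodies, whose very construction requires cancellation. Second, there is no real obstacle to speak of — the only nontrivial input is the recovery of $K$ from $h_K$, which is the standard separation argument and can be quoted from \cite{Burago-Zalgaller}. The proof is essentially a one-line reduction to cancellation in the additive group of real numbers.
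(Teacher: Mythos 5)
Your proof is correct and follows the same route as the paper: the paper simply reduces Proposition \ref{prop-7} to the general cancellation property of convex bodies under Minkowski sum (citing it without proof), and you supply the standard support-function argument for that general fact. Nothing further is needed.
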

Proposition \ref{prop-7} follows from the more general fact that the semigroup of convex bodies with respect to
the Minkowski sum has cancelation property.
The next statement is easy to verify:
\begin{Prop} \label{prop-8}
The map which associates to a finite nonempty set $A \subset \z^n$ its convex hull $\Delta(A)$,
is a homomorphism of semigroups from $\K$ to $\P$.
\end{Prop}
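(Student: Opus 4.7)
The plan is to verify the two ingredients of being a semigroup homomorphism: (i) that the target $\Delta(A)$ actually lies in $\P$, and (ii) that $\Delta$ intertwines the two addition operations, i.e.\ $\Delta(A+B) = \Delta(A) + \Delta(B)$.

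Part (i) is immediate. Since $A \subset \z^n$ is finite and nonempty, its convex hull is a bounded polytope whose vertex set is contained in $A$, hence in $\z^n$. Thus $\Delta(A)$ is a convex integral polytope, i.e.\ an element of $\P$.

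Part (ii) is the substantive step, and I would prove the two inclusions separately. For the inclusion $\Delta(A+B) \subset \Delta(A) + \Delta(B)$, observe that $A + B \subset \Delta(A) + \Delta(B)$ trivially, and that the Minkowski sum of two convex sets is convex; since $\Delta(A+B)$ is the smallest convex set containing $A+B$, the inclusion follows. For the reverse inclusion $\Delta(A) + \Delta(B) \subset \Delta(A+B)$, take an arbitrary point $x + y$ with $x \in \Delta(A)$ and $y \in \Delta(B)$, write them as convex combinations $x = \sum_i \alpha_i a_i$ and $y = \sum_j \beta_j b_j$ with $a_i \in A$, $b_j \in B$, and expand
\[
x + y = \Bigl(\sum_j \beta_j\Bigr) \sum_i \alpha_i a_i + \Bigl(\sum_i \alpha_i\Bigr) \sum_j \beta_j b_j = \sum_{i,j} \alpha_i \beta_j (a_i + b_j).
\]
The coefficients $\alpha_i \beta_j$ are nonnegative and sum to $1$, so $x+y$ is a convex combination of points of $A+B$, hence lies in $\Delta(A+B)$.

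There is no real obstacle here; the proof is a routine unpacking of definitions. The only minor point worth flagging is that in the second inclusion one must use the bilinearity-style identity above rather than trying to match convex combinations index-by-index, but this is standard.
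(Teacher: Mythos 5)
Your proof is correct and is exactly the routine verification the paper has in mind; the paper itself gives no argument, stating only that the proposition ``is easy to verify.'' Both inclusions and the integrality check are handled properly, so there is nothing to add.
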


For an integral convex polytope $\Delta \in \P$ let $\Delta_\z \in \K$ denote the finite set
of integral points in $\Delta$, i.e. $\Delta_\z = \Delta \cap \z^n$.
It is not hard to verify the following (see \cite{Askold-sum-of-finite-sets}):
\begin{Prop} \label{prop-9}
For any nonempty subset $A \subset \z^n$ we have:
$$A + n\Delta(A)_\z = (n+1)\Delta(A)_\z = \Delta(A)_\z + n\Delta(A)_\z.$$
\end{Prop}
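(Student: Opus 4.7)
The plan is to prove both equalities at once by establishing the forward chain
$A + n\Delta(A)_\z \subseteq \Delta(A)_\z + n\Delta(A)_\z \subseteq (n+1)\Delta(A)_\z$
together with the non-trivial reverse inclusion $(n+1)\Delta(A)_\z \subseteq A + n\Delta(A)_\z$; combined, these yield the triple equality. The forward inclusions are formal: the first holds because $A \subseteq \Delta(A) \cap \z^n = \Delta(A)_\z$, and the second because the Minkowski sum of an integral point of $\Delta(A)$ with an integral point of $n\Delta(A)$ is automatically an integral point of $\Delta(A) + n\Delta(A) = (n+1)\Delta(A)$.

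The content lies in the reverse inclusion, which I would derive from Carath\'eodory's theorem applied in $\r^n$. Given $x \in (n+1)\Delta(A) \cap \z^n$, the point $x/(n+1)$ lies in $\Delta(A) = \textup{conv}(A)$, so Carath\'eodory expresses it as a convex combination $x/(n+1) = \sum_{j=1}^{n+1} \lambda_j a_j$ using at most $n+1$ elements $a_j \in A$ with $\lambda_j \geq 0$ and $\sum_j \lambda_j = 1$. Multiplying through by $n+1$ gives $x = \sum_{j=1}^{n+1} t_j a_j$ with $t_j := (n+1)\lambda_j \geq 0$ and $\sum_j t_j = n+1$. Since these (at most $n+1$) non-negative reals sum to $n+1$, pigeonhole forces some $t_{j^*} \geq 1$. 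Setting $y := x - a_{j^*} = (t_{j^*} - 1)a_{j^*} + \sum_{j \neq j^*} t_j a_j$, the coefficients are non-negative and sum to $n$, so $y \in n\Delta(A)$; moreover $y \in \z^n$ because $x$ and $a_{j^*}$ lie in $\z^n$. Hence $y \in n\Delta(A)_\z$ and $x = a_{j^*} + y \in A + n\Delta(A)_\z$, as required.

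The only delicate point is the pigeonhole step. Without the Carath\'eodory cap of at most $n+1$ summands, one could imagine a representation of $x$ using many tiny coefficients all strictly less than $1$, leaving no element of $A$ available to peel off. It is the precise match between the ambient dimension $n$, which bounds Carath\'eodory's summand count, and the dilation factor $n+1$ appearing on the left-hand side that forces the largest coefficient to be at least $1$; this is the one step where the dimension of the lattice genuinely enters.
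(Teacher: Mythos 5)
Your proof is correct. The paper itself omits the argument, stating only that the proposition ``is not hard to verify'' and deferring to the reference \cite{Askold-sum-of-finite-sets}; the route you take --- the two formal inclusions plus the reverse inclusion via Carath\'eodory's theorem and the pigeonhole observation that $n+1$ nonnegative coefficients summing to $n+1$ must include one of size at least $1$ --- is precisely the standard argument behind that reference, so there is nothing to contrast.
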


We then have the following description for the Grothendieck semigroup of $\K$.
\begin{Th} \label{th-10}
The Grothendieck semigroup of $\K$ is isomorphic to $\P$. The homomorphism $\rho: \K \to \P$ is
given by $\rho(A) = \Delta(A)$.
\end{Th}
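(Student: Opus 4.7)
The plan is to use the universal property of the Grothendieck semigroup together with Propositions \ref{prop-7}, \ref{prop-8}, and \ref{prop-9}. Since $\P$ is a semigroup with cancelation (Proposition \ref{prop-7}) and $\rho : \K \to \P$ defined by $A \mapsto \Delta(A)$ is a semigroup homomorphism (Proposition \ref{prop-8}), the universal property of the Grothendieck semigroup furnishes a unique homomorphism $\bar\rho : \Gr(\K) \to \P$ factoring $\rho$. It therefore suffices to show that $\bar\rho$ is bijective.

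For surjectivity, I would observe that any integral convex polytope $\Delta \in \P$ is the convex hull of the finite set $\Delta_\z = \Delta \cap \z^n$ of its integral points, so $\Delta = \rho(\Delta_\z)$, which shows $\rho$ is surjective and hence so is $\bar\rho$.

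The main content lies in injectivity: I need to verify that if $\Delta(A) = \Delta(B)$ for $A, B \in \K$, then $A$ and $B$ are analogous in $\K$, i.e.\ there exists $C \in \K$ with $A + C = B + C$. This is exactly where Proposition \ref{prop-9} does the work. Setting $\Delta = \Delta(A) = \Delta(B)$ and taking $C = \Delta_\z \in \K$ (or more generally $C = n\Delta_\z$ for any $n \geq 1$), Proposition \ref{prop-9} applied separately to $A$ and to $B$ gives
\[
A + \Delta_\z = 2\Delta_\z = B + \Delta_\z,
\]
since $\Delta(A)_\z = \Delta_\z = \Delta(B)_\z$. Therefore $A \sim B$ in $\K$, so $\bar\rho$ is injective.

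Putting the two steps together, $\bar\rho : \Gr(\K) \to \P$ is an isomorphism of semigroups, and by construction it sends the class of $A$ to $\Delta(A)$. I don't expect any real obstacle here: both the cancelation property of $\P$ and the crucial ``stabilization'' identity of Proposition \ref{prop-9} have already been isolated, so the present theorem reduces to a short formal argument built on them.
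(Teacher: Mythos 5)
Your route is the paper's: factor $\rho$ through the Grothendieck semigroup using Propositions \ref{prop-7} and \ref{prop-8}, get surjectivity from $\Delta = \Delta(\Delta_\z)$, and reduce injectivity to the stabilization identity of Proposition \ref{prop-9}. The structure is correct, but the injectivity step as written contains a real error: you quote Proposition \ref{prop-9} as giving $A + \Delta_\z = 2\Delta_\z$ and add that one could take the witness to be $n\Delta_\z$ ``for any $n \geq 1$.'' The $n$ in Proposition \ref{prop-9} is not a free parameter; it is the dimension of the ambient lattice $\z^n$, and the statement is $A + n\Delta(A)_\z = (n+1)\Delta(A)_\z$. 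The identity you display, with witness $C = \Delta_\z$, is false in general. For example, in $\z^2$ take $A = \{(0,0),(3,0),(0,3)\}$, so $\Delta = \Delta(A)$ is the triangle $\{x,y \geq 0,\ x+y \leq 3\}$ and $B = \Delta_\z$ has ten points. Then $\Delta(A) = \Delta(B)$, but the point $(2,2)$ lies in $B + \Delta_\z$ (as $(1,1)+(1,1)$) and not in $A + \Delta_\z$, since none of $(2,2)-(0,0)$, $(2,2)-(3,0)$, $(2,2)-(0,3)$ lies in $\Delta$. So $A + \Delta_\z \neq B + \Delta_\z$, and $C = \Delta_\z$ does not witness $A \sim B$.

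The repair is immediate and restores exactly the paper's argument: take $C = n\Delta_\z$ with $n$ the lattice dimension. Proposition \ref{prop-9}, applied to $A$ and to $B$ separately, gives $A + n\Delta_\z = (n+1)\Delta_\z = B + n\Delta_\z$ whenever $\Delta(A) = \Delta(B) = \Delta$, hence $A \sim B$. With that correction your proof coincides with the one in the paper (the paper phrases it as ``$A$ is analogous to $\Delta(A)_\z$,'' which is the same content).
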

\begin{proof}
From Proposition \ref{prop-8} it follows that if $A \sim B$ then $\Delta(A) \sim \Delta(B)$.
Conversely, from Proposition \ref{prop-9} we know that $A$ and $\Delta(A)_\z$ are analogous.
By definition if $\Delta(A) = \Delta(B)$ then $\Delta(A)_\z = \Delta(B)_\z$. So if $\Delta(A)
= \Delta(B)$ then $A$ and $B$ are analogous.
\end{proof}

\section{Subspaces of matrix elements for $(\c^*)^n$}  \label{sec-matrix-elements-torus}
In the ring of regular functions on $(\c^*)^n$  there is a collection of subspaces which is of
particular interest, namely the subspaces of the form $L_A$ where $A \in \z^n$ is a nonempty finite subset.
The following are obvious:
\begin{Prop} \label{prop-11}
1) For any finite nonempty set $A \subset \z^n$ the dimension of the subspace $L_A$ is equal to the
number of points in $A$.
2) For finite nonempty sets $A, B \subset \z^n$ we have $L_AL_B = L_{A+B}$.
\end{Prop}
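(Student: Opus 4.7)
The plan is to verify both statements directly from the definition of $L_A$ as the subspace spanned by the characters $\{x^k : k \in A\}$, where each character $x^k$ has support $\{k\} \subset A$.

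For part (1), I would observe that these characters span $L_A$ by the very definition of the support of a Laurent polynomial: every $f \in L_A$ is by definition a finite linear combination of the $x^k$ with $k \in A$, and conversely each $x^k$ with $k \in A$ lies in $L_A$. Linear independence of $\{x^k : k \in A\}$ is the classical fact that distinct characters of the torus $(\c^*)^n$ are linearly independent as regular functions; this follows from Artin's linear independence of characters, or from a direct induction on $n$ using that a nontrivial polynomial relation among distinct monomials in one variable would violate the fact that a nonzero Laurent polynomial in one variable has only finitely many zeros. Hence $\dim L_A = \#A$.

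For part (2), I would unravel the definition of the product: $L_A L_B$ is the linear span of all products $fg$ with $f \in L_A$ and $g \in L_B$. By bilinearity this span coincides with the span of the elementary products $\{x^a x^b : a \in A,\, b \in B\}$. Using $x^a x^b = x^{a+b}$ and the fact that $\{a+b : a \in A, b \in B\}$ is exactly $A+B$, we conclude that $L_A L_B$ is spanned by $\{x^c : c \in A+B\}$, which is $L_{A+B}$ by the definition in part (1).

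There is no real obstacle: both assertions are essentially formal consequences of the definitions, the only nontrivial input being the linear independence of characters, which is classical.
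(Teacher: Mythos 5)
Your proof is correct and fills in exactly the verification the paper leaves implicit: the paper states Proposition \ref{prop-11} without proof, declaring it obvious, and your argument via linear independence of characters and the identity $x^a x^b = x^{a+b}$ is the standard justification intended. No discrepancy with the paper's approach.
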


A finite nonempty subset $A \subset \z^n$ gives a finite dimensional diagonal representation
$\pi_A: (\c^*)^n \to (\c^*)^r \subset \GL(r, \c)$ where $r = \#A$. The subspace $L_A$ is in fact the space of
matrix elements \footnote{Any linear combination of the matrix entries of a representation,
regarded as a function on the group, is called a {\it matrix element} of the given representation.}
of the representation $\pi_A$.

\begin{Def}
According to Proposition \ref{prop-11}(2), the collection of subspaces $L_A$ is a semigroup
with respect to product of subspaces. We call it the
{\it semigroup of subspaces of matrix elements of $(\c^*)^n$} and denote it by $K_{\textup{mat}}[(\c^*)^n]$.
\end{Def}

\begin{Th} \label{th-13}
The Grothendieck semigroup of $K_{\textup{mat}}[(\c^*)^n]$ is isomorphic to $\P$. The homomorphism
$\rho: K_{\textup{mat}}[(\c^*)^n] \to \P$ is given by $\rho(L_A) = \Delta(A)$.
The completion $\overline{L_A}$ of $L_A$ is equal to  $L_B$ where $L_B= \Delta_\z(A)$.
\end{Th}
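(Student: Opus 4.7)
The plan is to compute the completion $\overline{L_A}$ explicitly as $L_{\Delta_\z(A)}$, from which the description of $\Gr(K_{\textup{mat}}[(\c^*)^n])$ follows by combining Proposition \ref{prop-1} with Theorem \ref{th-10}.

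For the inclusion $L_{\Delta_\z(A)} \subseteq \overline{L_A}$: by Proposition \ref{prop-11}(2), the map $A \mapsto L_A$ turns the addition of finite subsets into the product of subspaces. Thus the equality $A + n\Delta(A)_\z = \Delta(A)_\z + n\Delta(A)_\z$ from Proposition \ref{prop-9} translates into $L_A \cdot L_{\Delta_\z(A)}^n = L_{\Delta_\z(A)} \cdot L_{\Delta_\z(A)}^n$. Hence $L_A$ and $L_{\Delta_\z(A)}$ are analogous in $K_{\textup{mat}}[(\c^*)^n]$, and Proposition \ref{prop-1}(3) gives the inclusion.

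For the reverse inclusion $\overline{L_A} \subseteq L_{\Delta_\z(A)}$: any $f \in \overline{L_A}$ is automatically a Laurent polynomial since $(\c^*)^n$ is normal and $f$ is integral over $\c[(\c^*)^n]$. To bound its support, I would exploit an integral equation $f^m + a_1 f^{m-1} + \cdots + a_m = 0$ with $a_i \in L_A^i$. For each linear functional $\ell \in (\r^n)^*$ and nonzero Laurent polynomial $g$, set $\mu_\ell(g) = \max\{\ell(u) : u \in \textup{supp}(g)\}$; this satisfies $\mu_\ell(gh) = \mu_\ell(g) + \mu_\ell(h)$ and $\mu_\ell(g+h) \leq \max(\mu_\ell(g), \mu_\ell(h))$. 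Since the support of $a_i$ lies in $i\Delta(A)$, one has $\mu_\ell(a_i) \leq i \max_{\Delta(A)} \ell$, while $\mu_\ell(f^m) = m\mu_\ell(f)$. Applying $\mu_\ell$ to $f^m = -\sum_{i \geq 1} a_i f^{m-i}$ yields $m\mu_\ell(f) \leq \max_{i \geq 1}(i \max_{\Delta(A)} \ell + (m-i)\mu_\ell(f))$, which forces $\mu_\ell(f) \leq \max_{\Delta(A)} \ell$. Ranging over all $\ell$ gives $\Delta(f) \subseteq \Delta(A)$, hence $\textup{supp}(f) \subseteq \Delta_\z(A)$ and $f \in L_{\Delta_\z(A)}$.

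The rest is formal. By Proposition \ref{prop-1}(1) combined with the preceding, $L_A \sim L_B$ iff $L_{\Delta_\z(A)} = L_{\Delta_\z(B)}$ iff $\Delta_\z(A) = \Delta_\z(B)$ (distinct characters of $(\c^*)^n$ are linearly independent) iff $\Delta(A) = \Delta(B)$. Combined with Proposition \ref{prop-11}(2), the assignment $L_A \mapsto \Delta(A)$ descends to a well-defined injective semigroup homomorphism $\Gr(K_{\textup{mat}}[(\c^*)^n]) \to \P$, and it is surjective because every integral polytope $\Delta$ is the convex hull of $\Delta_\z$. The main obstacle is the Newton polytope estimate for elements of $\overline{L_A}$; the $\ell$-valuation argument on the integral equation is the one genuinely nontrivial point, the rest being a transfer of the combinatorial identities of Section \ref{sec-finite-sets} to subspaces via Proposition \ref{prop-11}(2).
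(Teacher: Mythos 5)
Your proposal is correct, and it arrives at the theorem by a route that is organized differently from the paper's and is in one respect more self-contained. The paper's proof goes: $K_{\textup{mat}}[(\c^*)^n]\cong\K$ by Proposition \ref{prop-11}, hence $\Gr(K_{\textup{mat}}[(\c^*)^n])\cong\Gr(\K)\cong\P$ by Theorem \ref{th-10}, and then asserts that the completion formula ``follows from Proposition \ref{prop-1}.'' You reverse the order: you first compute $\overline{L_A}=L_{\Delta_\z(A)}$ and then deduce the Grothendieck semigroup from Proposition \ref{prop-1}(1). The easy inclusion $L_{\Delta_\z(A)}\subseteq\overline{L_A}$ you obtain exactly as the paper implicitly does, by pushing Proposition \ref{prop-9} through the isomorphism of Proposition \ref{prop-11}(2) and invoking Proposition \ref{prop-1}(3). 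The genuine added value is the reverse inclusion $\overline{L_A}\subseteq L_{\Delta_\z(A)}$: Proposition \ref{prop-1} alone only identifies $\overline{L_A}$ as the maximal subspace analogous to $L_A$, which gives $L_{\Delta_\z(A)}\subseteq\overline{L_A}$ but not equality; the paper is silently relying on the references for the other direction, whereas your $\ell$-valuation argument on the integral equation (using normality of $\c[(\c^*)^n]$ and the additivity $\mu_\ell(f^m)=m\mu_\ell(f)$, which holds for generic $\ell$ since the extreme term of a power cannot cancel) supplies a complete proof that the Newton polytope of any element of $\overline{L_A}$ lies in $\Delta(A)$. One cosmetic remark: your final chain tacitly shows that analogy within $K_{\textup{mat}}[(\c^*)^n]$ and analogy within the ambient $K[(\c^*)^n]$ coincide on these subspaces (one direction from Proposition \ref{prop-1}, the other from Proposition \ref{prop-9}); this is exactly what is needed to apply Proposition \ref{prop-1} to the Grothendieck semigroup of the subsemigroup, and it is worth stating explicitly, but it is present in your argument. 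In short: same combinatorial core (Propositions \ref{prop-9} and \ref{prop-11}), different logical order, with the one nontrivial analytic step filled in rather than cited.
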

\begin{proof}
From Proposition \ref{prop-11} we know that the semigroup
$K_{\textup{mat}}[(\c^*)^n]$ is isomorphic to $\K$. Also by Theorem \ref{th-10}
the Grothendieck semigroup of $\K$ is $\P$.
The equality $\overline{L_A} =L_B$  now follows from Proposition \ref{prop-1}.
\end{proof}

\begin{proof}[Proof of the Bernstein-Ku\v{s}nirenko theorem]
First let us prove the Ku\v{s}nirenko theorem, namely {\it for any
nonempty finite subset $A \subset \z^n$, the self-intersection index
$[L_A, \ldots, L_A]$ is equal to $n! \Vol(\Delta(A))$.} According to
Theorem \ref{th-13} we have $\overline{L_A^k} = L_{B_k}$ where $B_k
= (k\Delta(A))_\z$ is the set of integral points in $k\Delta(A)$. By
Proposition \ref{prop-11} the dimension of $L_{B_k}$ is equal to
$\#B_k$, i.e. the number of integral points in the polytope
$k\Delta(A)$. Put $H(k) = \dim(L_{B_k}) = \#B_k$. Note that, as $k
\to \infty$, the number of integral points in $k\Delta(A)$
asymptotically equals to the volume of the polytope $k\Delta(A)$. It
follows that  the limit $\lim_{k \to \infty} H(k)/k^n$ is equal to
$\Vol(\Delta(A))$. (See \cite[Part
I]{Askold-Kiumars-Newton-Okounkov} for a more detailed study of
asymptotic of such kind.) We can now conclude the Ku\v{s}nirenko
theorem as a corollary of Theorem
\ref{th-intersec-index-Hilbert-function}. The
Bernstein-Ku\v{s}nirenko theorem automatically follows from the
Ku\v{s}nirenko theorem and the multi-linearity of intersection index
and mixed volume.
\end{proof}

\section{Semigroup of representations up to spectral equivalence} \label{sec-main-spectrally-equiv}
In this section we describe the Grothendieck semigroup of the
semigroup of finite dimensional representations with tensor product and up to the spectral
equivalence.

We will need a generalization of Theorem \ref{th-10}. Let $\K_0 \subset \K$ be a subset in
$\K$ equipped with some addition operation $\tilde{+}$ with respect to which $\K_0$ is a
semigroup. Assume that $(\K_0, \tilde{+})$ satisfies the following properties:

\begin{enumerate}
\item If $A \in \K_0$ then $\Delta(A)_\z \in \K_0$.
\item  If $A, B \in \K_0$ then $A+B \subset A \tilde{+} B$.
\item If $A, B \in \K_0$ then $\Delta(A \tilde{+} B) = \Delta(A+B)$.
\end{enumerate}

With the semigroup $(\K_0, \tilde{+})$ let us associate the semigroup
$\P_0 \subset \P$ whose elements are the integral polytopes of the form
$\Delta(A)$ for $A \in \K_0$ and the addition operation in $\P_0$ is the
Minkowski sum (by the property (3) the set $\P_0$ is closed under the
Minkowski sum).

Repeating word by word the proof of Theorem \ref{th-10}, with $(\K_0, \tilde{+})$
instead of $\K$ and $\P_0$ instead of $\P$, we obtain:
\begin{Th} \label{th-14}
The Grothendieck semigroup of $(\K_0, \tilde{+})$ is isomorphic to $\P_0$. The
homomorphism $\rho: (\K_0, \tilde{+}) \to \P_0$ is given by $\rho(A) = \Delta(A)$.
\end{Th}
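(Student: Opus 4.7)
The plan is to mirror the proof of Theorem~\ref{th-10} step by step, using the three axioms to transfer each statement about $+$ to one about $\tilde{+}$. The key facts needed are: (a) the map $\rho(A) := \Delta(A)$ defines a homomorphism $(\K_0, \tilde{+}) \to \P_0$, and (b) every $A \in \K_0$ is analogous (with respect to $\tilde{+}$) to $\Delta(A)_\z$. Given these, the argument concludes exactly as before: if $\Delta(A) = \Delta(B)$ then $\Delta(A)_\z = \Delta(B)_\z$, so $A \sim \Delta(A)_\z = \Delta(B)_\z \sim B$; conversely, $A \sim B$ forces $\rho(A) = \rho(B)$ because $\P_0$ inherits cancellation from $\P$ (Proposition~\ref{prop-7}) and any homomorphism into a cancellative semigroup factors through the Grothendieck semigroup. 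Surjectivity of $\rho$ is built into the definition of $\P_0$.

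Fact (a) is immediate from condition (3) combined with Proposition~\ref{prop-8}: indeed $\Delta(A \tilde{+} B) = \Delta(A + B) = \Delta(A) + \Delta(B)$.

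Fact (b) is the substantive step. I would establish the set-theoretic identity
$$A \tilde{+} n\Delta(A)_\z \;=\; (n+1)\Delta(A)_\z \;=\; \Delta(A)_\z \tilde{+} n\Delta(A)_\z$$
for every $n \geq 1$, which directly witnesses $A \sim \Delta(A)_\z$. For the first equality, Proposition~\ref{prop-9} gives $A + n\Delta(A)_\z = (n+1)\Delta(A)_\z$, and condition (2) then yields the inclusion $(n+1)\Delta(A)_\z \subseteq A \tilde{+} n\Delta(A)_\z$; for the reverse inclusion, condition (3) supplies $\Delta(A \tilde{+} n\Delta(A)_\z) = \Delta(A + n\Delta(A)_\z) = (n+1)\Delta(A)$, so every element of $A \tilde{+} n\Delta(A)_\z$ is a lattice point of $(n+1)\Delta(A)$ and thus lies in $(n+1)\Delta(A)_\z$. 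The identical argument with $\Delta(A)_\z$ in place of $A$ supplies the third member of the displayed chain. Before all of this one must check that $n\Delta(A)_\z \in \K_0$: by iterating condition (3), the $n$-fold $\tilde{+}$-power of $A$ has convex hull $n\Delta(A)$, so condition (1) puts its set of lattice points $n\Delta(A)_\z$ into $\K_0$.

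The main obstacle is essentially clerical: keeping straight which sum ($+$ versus $\tilde{+}$) is acting at each step, and making sure every set that appears really belongs to $\K_0$ so that $\tilde{+}$ is defined on it. Once the axioms are invoked in the right order, as above, there is no new content beyond Theorem~\ref{th-10} and Proposition~\ref{prop-9}.
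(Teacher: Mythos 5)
Your proposal is correct and is exactly what the paper intends: the paper's proof consists of the single instruction to repeat the proof of Theorem \ref{th-10} verbatim, and your write-up carries this out, correctly identifying that the only substantive point is the $\tilde{+}$-analogue of Proposition \ref{prop-9} (that $A$ and $\Delta(A)_\z$ are $\tilde{+}$-analogous via the common summand $n\Delta(A)_\z$, whose membership in $\K_0$ you rightly verify from conditions (1) and (3)). The only nitpick is the phrase ``for every $n\geq 1$'': here $n$ is the fixed dimension of the lattice and Proposition \ref{prop-9} supplies exactly the one instance you need, but this does not affect the argument.
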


Now let $G$ be a complex connected reductive algebraic group.
\begin{Def}
The {\it spectrum} $\Spec(\pi)$ of a  finite dimensional representation $\pi$ of $G$
is the set of all dominant weights $\lambda$
where $V_\lambda$ appears in the decomposition of $\pi$ as a direct sum of irreducible representations.
\end{Def}

We say that two finite dimensional representations $\pi_1, \pi_2$ are
{\it spectrally equivalent} if their spectrums coincide (multiplicities of the irreducible
representations appearing in $\pi_1$ and $\pi_2$ can be different).
It is easy to see that the tensor product of representations respects the spectral equivalence.

\begin{Def}
We denote the semigroup of all the finite dimensional
representations of $G$ up to the spectral equivalence, and with respect to the
tensor product of representations, by $\R_{\Spec}(G)$.
\end{Def}

Let us introduce the following semigroups:
\begin{itemize}
\item [-] $\K_W$, the subsemigroup of $\K$ consisting of finite subsets which are invariant under $W$.
\item [-] $\P_W$, the subsemigroup of $\P$ consisting of convex integral polytopes which are invariant under $W$.
\end{itemize}

\begin{Def}
For a representation $\pi$ let $\Spec_W(\pi) \in K_W$ be the
union of all the Weyl orbits of elements of $\Spec(\pi)$, i.e.
$\Spec_W(\pi) = \{ w(\lambda) \mid \lambda \in \Spec(\pi), ~ w \in W\}$.
We call the convex hull of the set $\Spec_W(\pi)$ the {\it weight polytope} of $\pi$ and
denote it by $\Delta_W(\pi)$. Also the intersection of the weight polytope
with the positive Weyl chamber will be called the {\it moment polytope} of $\pi$
and denoted by $\Delta_W^+(\pi)$.
\end{Def}

Consider the map $\Spec_W: \R_{\Spec}(G) \to \K_W$ which associates to a representation $\pi$
the set $\Spec_W(\pi)$.

\begin{Prop} \label{prop-17}
1) The map $\Spec_W$ is onto, i.e.
for any subset $A \in \K_W$ there is a representation $\pi$ with $\Spec_W(\pi)= A$.
2) For any polytope $\Delta\in P_W$ there is a representation $\pi$ with $\Delta_W(\pi) = \Delta$.
\end{Prop}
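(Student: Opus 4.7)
My plan is to construct the desired representations explicitly as direct sums of irreducibles, exploiting that $C$ is a fundamental domain for the Weyl action on $\r^n$.

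For part (1), given $A \in \K_W$, I would use the fact that every $W$-orbit in $\z^n$ meets $C$ in exactly one point to write
$$A \;=\; \bigcup_{\lambda \in A \cap C} W \cdot \lambda.$$
Since $A$ is finite, $A \cap C$ is a finite set of dominant weights, and I would take $\pi = \bigoplus_{\lambda \in A \cap C} V_\lambda$. By highest weight theory $\Spec(\pi) = A \cap C$, so by definition $\Spec_W(\pi) = \bigcup_{\lambda \in A \cap C} W \cdot \lambda = A$.

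For part (2), given $\Delta \in \P_W$, I would apply part (1) to the finite $W$-invariant set $A := \Delta \cap \z^n$ (finite by compactness of $\Delta$, and $W$-invariant since both $\Delta$ and $\z^n$ are). The resulting representation $\pi$ then satisfies
$$\Delta_W(\pi) \;=\; \textup{conv}(\Spec_W(\pi)) \;=\; \textup{conv}(\Delta \cap \z^n) \;=\; \Delta,$$
where the last equality holds because an integral polytope has integral vertices and hence coincides with the convex hull of its lattice points.

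No real obstacle arises; the proposition is essentially an unpacking of the definitions. The two ingredients I rely on---that $C$ contains a unique representative of each $W$-orbit in $\z^n$, and that an integral polytope equals the convex hull of its lattice points---are both standard facts already invoked elsewhere in the paper.
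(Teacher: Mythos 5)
Your proof is correct and follows essentially the same route as the paper: for (1) take the direct sum of the $V_\lambda$ over $\lambda \in A \cap C$, and for (2) apply (1) to $\Delta \cap \z^n$ and use that an integral polytope is the convex hull of its lattice points. The paper's proof is just a terser version of exactly this argument.
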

\begin{proof}
1) Let $A_0$ be the intersection of $A$ with the positive Weyl chamber. Let $\pi$ be the
direct sum of the irreducible representations $V_\lambda$ for $\lambda \in A_0$. Then
$\Spec_W(\pi) = A$. 2) follows from 1).
\end{proof}

The tensor product of representations induces a binary operation on
the set $\K_W$ which we denote it by $\tilde{+}$.
From definition and Proposition \ref{prop-17}(1) the map $\Spec_W: (\R_{\Spec}(G), \otimes) \to
(\K_W, \tilde{+})$ is an isomorphism of semigroups.

For a representation $\pi$ of $G$, let $\chi(\pi) \subset \z^n$ be the set of characters of the restriction of
$\pi$ to the torus $T$.
It is well-known that the set $\chi(\pi)$ is invariant under the Weyl group $W$. From the representation theory
of torus $T$ we have:
\begin{Prop} \label{prop-15}
For any two representations $\pi_1, \pi_2$ one has $\chi(\pi_1 \otimes \pi_2) = \chi(\pi_1) + \chi(\pi_2)$.
\end{Prop}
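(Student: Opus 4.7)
The plan is to decompose each representation into weight spaces under the maximal torus $T$ and then analyze the tensor product. Concretely, the restriction $\pi_i|_T$ decomposes as a direct sum of weight spaces
\[
\pi_i|_T = \bigoplus_{\lambda \in \z^n} V_i^{\lambda},
\]
and the set $\chi(\pi_i)$ is by definition precisely those $\lambda \in \z^n$ for which $V_i^{\lambda} \neq 0$. This is the standard diagonalization of a torus representation (a commuting family of semisimple operators is simultaneously diagonalizable).

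Next I would compute the weight decomposition of the tensor product. By the distributivity of tensor products over direct sums,
\[
(\pi_1 \otimes \pi_2)|_T \;=\; \bigoplus_{\lambda_1, \lambda_2 \in \z^n} V_1^{\lambda_1} \otimes V_2^{\lambda_2}.
\]
For weight vectors $v_i \in V_i^{\lambda_i}$ and $t \in T$ we have $t\cdot(v_1 \otimes v_2) = (t\cdot v_1)\otimes (t\cdot v_2) = t^{\lambda_1}t^{\lambda_2}(v_1\otimes v_2) = t^{\lambda_1+\lambda_2}(v_1\otimes v_2)$, so $V_1^{\lambda_1}\otimes V_2^{\lambda_2}$ sits inside the $(\lambda_1+\lambda_2)$-weight space of $\pi_1\otimes\pi_2$. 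Collecting terms by value of $\lambda = \lambda_1 + \lambda_2$ gives
\[
(\pi_1 \otimes \pi_2)^{\lambda} \;=\; \bigoplus_{\lambda_1 + \lambda_2 = \lambda} V_1^{\lambda_1}\otimes V_2^{\lambda_2}.
\]

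Finally I would read off the set of weights. The summand on the right is nonzero iff there exist $\lambda_1 \in \chi(\pi_1)$ and $\lambda_2 \in \chi(\pi_2)$ with $\lambda_1+\lambda_2 = \lambda$; this is exactly the condition $\lambda \in \chi(\pi_1) + \chi(\pi_2)$ (Minkowski sum of subsets of $\z^n$). Hence $\chi(\pi_1\otimes\pi_2) = \chi(\pi_1) + \chi(\pi_2)$.

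There is no real obstacle: the statement is purely the torus-representation fact that characters multiply under tensor product, translated additively via the identification of the character group of $T$ with $\z^n$. The only thing to be slightly careful about is the convention in the definition of $\chi$ (whether one records multiplicities or only the support); since $\chi(\pi_i)$ is defined as the support of the weight decomposition, no multiplicity bookkeeping intervenes, and the identity is exactly the Minkowski sum identity above.
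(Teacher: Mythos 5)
Your proof is correct: the weight-space decomposition of $(\pi_1\otimes\pi_2)|_T$ into sums of $V_1^{\lambda_1}\otimes V_2^{\lambda_2}$, together with the observation that a direct sum of such (nonzero) tensor products cannot vanish, gives both inclusions of the Minkowski-sum identity. The paper offers no written proof, merely citing ``the representation theory of the torus $T$''; your argument is precisely the standard fact being invoked, so the approaches coincide.
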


And from the highest weight theory for the group $G$ one obtains:
\begin{Prop} \label{prop-16}
The convex hull of $\chi(\pi)$ coincides with $\Delta_W(\pi)$.
\end{Prop}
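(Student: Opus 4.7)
I would prove Proposition \ref{prop-16} by reducing it to a well known fact about irreducible representations, namely that the weights of $V_\lambda$ lie between the Weyl orbit $W \cdot \lambda$ and its convex hull.

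First, decompose $\pi$ into irreducibles: $\pi \cong \bigoplus_{\lambda \in \Spec(\pi)} V_\lambda^{\oplus m_\lambda}$. Restricting to the torus $T$, the set of weights splits as a union, so
\[
\chi(\pi) \;=\; \bigcup_{\lambda \in \Spec(\pi)} \chi(V_\lambda),
\]
and by definition
\[
\Spec_W(\pi) \;=\; \bigcup_{\lambda \in \Spec(\pi)} W\cdot \lambda.
\]
Thus it suffices to prove, for each dominant weight $\lambda$, the two-sided containment $W\cdot\lambda \;\subseteq\; \chi(V_\lambda) \;\subseteq\; \textup{conv}(W\cdot\lambda)$.

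The left containment is immediate: $\lambda$ is the highest weight of $V_\lambda$ and hence belongs to $\chi(V_\lambda)$; since $\chi(V_\lambda)$ is $W$-invariant (the normalizer of $T$ in $G$ permutes the weight spaces), the entire orbit $W \cdot \lambda$ lies in $\chi(V_\lambda)$. The right containment is the classical statement of highest weight theory that every weight $\mu$ of $V_\lambda$ satisfies $\mu \in \textup{conv}(W\cdot\lambda)$. This is standard: after conjugating $\mu$ into the positive Weyl chamber by some $w \in W$, one shows $w\mu \preceq \lambda$ in the dominance order (that is, $\lambda - w\mu$ is a nonnegative integer combination of simple roots), and a direct convexity argument then places $\mu$ inside $\textup{conv}(W\cdot\lambda)$.

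Taking convex hulls and using that the convex hull of a union of sets equals the convex hull of the union of their convex hulls, the two containments yield
\[
\textup{conv}(\Spec_W(\pi)) \;\subseteq\; \textup{conv}(\chi(\pi)) \;\subseteq\; \textup{conv}\Bigl(\bigcup_{\lambda} \textup{conv}(W\cdot\lambda)\Bigr) \;=\; \textup{conv}(\Spec_W(\pi)),
\]
so both are equal to $\Delta_W(\pi)$. There is no real obstacle here; the only nontrivial ingredient is the classical fact that weights of an irreducible lie in the convex hull of the Weyl orbit of the highest weight, which can be quoted directly from highest weight theory.
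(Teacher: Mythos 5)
Your proposal is correct and follows exactly the route the paper intends: the paper gives no written proof, simply asserting that Proposition \ref{prop-16} follows ``from the highest weight theory,'' and your argument fills in precisely that deduction (decomposition into irreducibles, the two-sided containment $W\cdot\lambda \subseteq \chi(V_\lambda) \subseteq \textup{conv}(W\cdot\lambda)$, and taking convex hulls). No discrepancy to report.
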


We will need the following key fact regarding tensor product of finite dimensional representations.
It is commonly known as the PRV conjecture. It was first conjectured by
K. Parthasarathy, R. Ranga Rao and V. Varadarajan in \cite{PRV}.
Later it was proved by S. Kumar in \cite{Kumar}.
\begin{Th} \label{th-18}
For any two finite dimensional representations $\pi_1, \pi_2$ of $G$ we have:
$\Spec_W(\pi_1) + \Spec_W(\pi_2)\subseteq
\Spec_W(\pi_1 \otimes \pi_2).$
\end{Th}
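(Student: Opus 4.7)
The plan is to recognize Theorem~\ref{th-18} as essentially a reformulation of the classical PRV conjecture (as proved by Kumar \cite{Kumar}), and to deduce it by a short reduction that uses only the $W$-invariance of $\Spec_W$ and the distributivity of tensor product over direct sums.

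First, decompose $\pi_1 = \bigoplus_i V_{\lambda_i}$ and $\pi_2 = \bigoplus_j V_{\mu_j}$ into irreducible constituents. Then $\pi_1 \otimes \pi_2 = \bigoplus_{i,j} V_{\lambda_i} \otimes V_{\mu_j}$, so
$$\Spec_W(\pi_1 \otimes \pi_2) = \bigcup_{i,j} \Spec_W(V_{\lambda_i} \otimes V_{\mu_j}),$$
while $\Spec_W(\pi_1) + \Spec_W(\pi_2) = \bigcup_{i,j}(W\lambda_i + W\mu_j)$. Hence it suffices to establish, for any pair of dominant weights $\lambda, \mu$, the inclusion $W\lambda + W\mu \subseteq \Spec_W(V_\lambda \otimes V_\mu)$.

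Second, pick any $w_1, w_2 \in W$ and consider $w_1\lambda + w_2\mu$. Because $\Spec_W(V_\lambda \otimes V_\mu)$ is $W$-invariant by construction, $w_1\lambda + w_2\mu$ lies in it if and only if $\lambda + w_1^{-1}w_2\mu$ does. Setting $w = w_1^{-1}w_2$, the full claim thus reduces to the following statement: for every $w \in W$ and all dominant $\lambda, \mu$, the weight $\lambda + w\mu$ belongs to $\Spec_W(V_\lambda \otimes V_\mu)$; equivalently, the unique dominant weight in the $W$-orbit of $\lambda + w\mu$ is the highest weight of some irreducible constituent of $V_\lambda \otimes V_\mu$.

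This last assertion is precisely the original PRV conjecture, which Kumar's theorem confirms, completing the proof. The hard part, obviously, is the PRV statement itself; we use it as a black box since its proof (via the geometry of Bott--Samelson varieties and Demazure modules in Kumar's argument, or alternatively via Littelmann's path model) is well outside the scope of this note. The genuine content of our step is only the trivial reduction above, turning the Minkowski-sum inclusion of full Weyl orbits into the single-$w$ assertion addressed by PRV.
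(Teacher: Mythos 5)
Your proposal is correct and takes the same route as the paper, which states Theorem \ref{th-18} as the PRV conjecture and simply cites Kumar's proof \cite{Kumar} without further argument. The only addition on your side is the (routine but worth recording) reduction from the Minkowski-sum statement for arbitrary finite dimensional $\pi_1,\pi_2$ to the classical single-pair, single-$w$ form of PRV via decomposition into irreducibles and the $W$-invariance of $\Spec_W$; that reduction is sound.
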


From Proposition \ref{prop-15} and Proposition \ref{prop-16} one readily has the following
property of the weight polytope:
\begin{Prop} \label{prop-19}
For any two finite dimensional representations $\pi_1, \pi_2$ of $G$ we have:
$\Delta_W (\pi_1) + \Delta_W(\pi_2)= \Delta_W(\pi_1 \otimes \pi_2).$
\end{Prop}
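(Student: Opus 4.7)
The plan is to deduce Proposition \ref{prop-19} directly from Proposition \ref{prop-15} and Proposition \ref{prop-16} by taking convex hulls of both sides of the identity $\chi(\pi_1 \otimes \pi_2) = \chi(\pi_1) + \chi(\pi_2)$. By Proposition \ref{prop-16}, the weight polytope $\Delta_W(\pi_i)$ is precisely the convex hull of the finite set $\chi(\pi_i) \subset \z^n$, and likewise $\Delta_W(\pi_1 \otimes \pi_2)$ is the convex hull of $\chi(\pi_1) + \chi(\pi_2)$. So the proposition reduces to the purely convex-geometric identity
\[
\textup{conv}(A + B) \;=\; \textup{conv}(A) + \textup{conv}(B)
\]
for any two finite (or more generally, bounded) nonempty subsets $A, B \subset \r^n$.

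First I would verify the inclusion $\textup{conv}(A+B) \subseteq \textup{conv}(A) + \textup{conv}(B)$. The set on the right is convex, being a Minkowski sum of two convex sets, and it obviously contains $A + B$; since $\textup{conv}(A+B)$ is the smallest convex set containing $A+B$, the inclusion follows. Next I would verify the reverse inclusion by a direct convex combination computation: an arbitrary element of $\textup{conv}(A) + \textup{conv}(B)$ has the form $\sum_i \alpha_i a_i + \sum_j \beta_j b_j$ with $a_i \in A$, $b_j \in B$, $\alpha_i, \beta_j \geq 0$, $\sum_i \alpha_i = \sum_j \beta_j = 1$. Multiplying out one rewrites this as $\sum_{i,j} \alpha_i \beta_j (a_i + b_j)$, which is a convex combination of elements of $A+B$ since $\sum_{i,j} \alpha_i \beta_j = 1$, hence lies in $\textup{conv}(A+B)$.

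Applying this identity with $A = \chi(\pi_1)$, $B = \chi(\pi_2)$ and then using Proposition \ref{prop-15} and Proposition \ref{prop-16} gives
\[
\Delta_W(\pi_1) + \Delta_W(\pi_2) = \textup{conv}(\chi(\pi_1)) + \textup{conv}(\chi(\pi_2)) = \textup{conv}(\chi(\pi_1) + \chi(\pi_2)) = \textup{conv}(\chi(\pi_1 \otimes \pi_2)) = \Delta_W(\pi_1 \otimes \pi_2),
\]
which is the desired equality. There is essentially no obstacle here: once Propositions \ref{prop-15} and \ref{prop-16} are in hand, the statement is a one-line consequence of the standard fact that the convex-hull operation is a semigroup homomorphism from finite subsets (with Minkowski sum) to convex polytopes (with Minkowski sum), which was already implicitly used in the torus case via Proposition \ref{prop-8}.
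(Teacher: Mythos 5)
Your proof is correct and matches the paper's intended argument: the paper derives Proposition \ref{prop-19} directly from Propositions \ref{prop-15} and \ref{prop-16} ("one readily has"), exactly via the identity $\textup{conv}(A+B)=\textup{conv}(A)+\textup{conv}(B)$ that you spell out. No issues.
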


The following is the main result of the paper which describes
the Grothendieck semigroup of $(\R_\Spec(G), \otimes)$.
\begin{Th}[Main theorem] \label{th-20}
The Grothendieck semigroup for $\R_{\Spec}(G)$ is isomorphic to $\P_W$.
The homomorphism $\rho: \R_{\Spec}(G) \to \P_W$  maps a representation $\pi$
to its weight polytope $\Delta_W(\pi)$.
\end{Th}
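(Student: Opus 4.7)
The plan is to derive Theorem \ref{th-20} as a direct application of the general Grothendieck semigroup result, Theorem \ref{th-14}. First, observe that by Proposition \ref{prop-17}(1) the map $\Spec_W$ is a bijective semigroup homomorphism $(\R_{\Spec}(G),\otimes) \to (\K_W,\tilde{+})$, where the binary operation $\tilde{+}$ on $\K_W$ is defined to be transported from $\otimes$ via $\Spec_W$. Hence it suffices to compute the Grothendieck semigroup of $(\K_W,\tilde{+})$ and identify it with $\P_W$, and then translate $\rho$ back to representations via the formula $\Delta_W(\pi) = \Delta(\Spec_W(\pi))$ coming from the definitions.

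The core of the proof is therefore the verification that $(\K_W,\tilde{+}) \subset \K$ satisfies the three hypotheses of Theorem \ref{th-14}, with the associated polytope semigroup being exactly $\P_W$. Condition (1), that $\Delta(A)_\z \in \K_W$ whenever $A \in \K_W$, is immediate because the convex hull of a $W$-invariant set is $W$-invariant and the lattice $\z^n$ is preserved by $W$. Condition (3), that $\Delta(A\tilde{+}B) = \Delta(A+B)$, follows by reading off $\Delta(A\tilde{+}B) = \Delta_W(\pi_1\otimes\pi_2)$ from the definition and applying Proposition \ref{prop-19} together with the elementary identity $\Delta(A)+\Delta(B) = \Delta(A+B)$ for Minkowski sums of convex hulls. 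Finally one checks that the set of polytopes of the form $\Delta(A)$, $A\in\K_W$, is precisely $\P_W$: each such polytope is $W$-invariant and integral, while conversely every $W$-invariant integral polytope arises, by Proposition \ref{prop-17}(2), as $\Delta_W(\pi)$ for some representation $\pi$.

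The one genuinely non-trivial step is condition (2), which asks that $A+B \subseteq A\tilde{+}B$ for all $A,B\in\K_W$. Unwinding the definitions, this is the statement that
\[
\Spec_W(\pi_1) + \Spec_W(\pi_2) \subseteq \Spec_W(\pi_1 \otimes \pi_2),
\]
which is exactly the PRV theorem recalled as Theorem \ref{th-18}. This is the main obstacle in the proof: without the PRV inclusion one only has the opposite containment $\Delta(A\tilde{+}B) \subseteq \Delta(A+B)$ from general weight-polytope considerations, and condition (2) is what forces enough elements into $A\tilde{+}B$ to make the Grothendieck semigroup collapse down to the purely convex object $\P_W$.

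Once the three conditions are in place, Theorem \ref{th-14} gives that the Grothendieck semigroup of $(\K_W,\tilde{+})$ is $\P_W$ with projection $A \mapsto \Delta(A)$. Composing with the isomorphism $\Spec_W$ yields the desired statement: the Grothendieck semigroup of $\R_{\Spec}(G)$ is isomorphic to $\P_W$, and under this isomorphism $\rho(\pi) = \Delta(\Spec_W(\pi)) = \Delta_W(\pi)$. An immediate byproduct, worth noting, is that two finite dimensional representations $\pi_1,\pi_2$ are analogous in $\R_{\Spec}(G)$ if and only if $\Delta_W(\pi_1) = \Delta_W(\pi_2)$, which is what one needs for the subsequent application to the Kazarnovskii theorem.
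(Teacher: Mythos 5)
Your proposal is correct and follows essentially the same route as the paper: transport the problem to $(\K_W,\tilde{+})$ via $\Spec_W$, verify hypotheses (1)--(3) of Theorem \ref{th-14} using Propositions \ref{prop-17} and \ref{prop-19} and the PRV theorem (Theorem \ref{th-18}), and conclude. The paper's proof is just a terser version of the same argument; your identification of the PRV inclusion as the step supplying condition (2) is exactly the intended point.
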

\begin{proof}
It is enough to show that the semigroup $(\K_W,\tilde{+})$ is
isomorphic to the semigroup  $\P^W$.
According to Propositions \ref{prop-17} and \ref{prop-19}
and Theorem \ref{th-18} the semigroup $(\K_W, \tilde{+})$
satisfies the properties (1)-(3) stated before Theorem \ref{th-14}.
The theorem now follows from Theorem \ref{th-14}.
\end{proof}

Let us give another formulation of Theorem \ref{th-20}. Denote by $\P^+_W$ the semigroup of all polytopes
which can be represented as $\Delta \cap C$ for $\Delta \in \P_W$ together with the
Minkowski sum. It is easy to see that the map $\pi: \P_W \to \P^+_W$ defined by
$\pi(\Delta) = \Delta \cap C$ is an isomorphism of semigroups.

\begin{Th} \label{th-21}
The Grothendieck semigroup of $\R_{\Spec}(G)$ is isomorphic to $\P^+_W$.
The homomorphism $\rho: \R_{\Spec}(G) \to \P^+_W$  maps a representation $\pi$
to the moment polytope $\Delta^+_W(\pi)$.
\end{Th}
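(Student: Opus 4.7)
The plan is to reduce Theorem~\ref{th-21} to Theorem~\ref{th-20} by composing the Grothendieck-semigroup isomorphism of the latter with the natural ``intersect with $C$'' map. Write $\iota:\P_W \to \P^+_W$ for the map $\Delta \mapsto \Delta \cap C$. Theorem~\ref{th-20} gives a semigroup isomorphism $\rho:\Gr(\R_{\Spec}(G)) \to \P_W$ sending a representation $\sigma$ to its weight polytope $\Delta_W(\sigma)$. Once $\iota$ is known to be an isomorphism of semigroups, the composition $\iota \circ \rho$ is an isomorphism $\Gr(\R_{\Spec}(G)) \to \P^+_W$ sending $\sigma$ to $\Delta_W(\sigma) \cap C = \Delta^+_W(\sigma)$, which is precisely the homomorphism in the statement.

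So the entire content reduces to showing that $\iota:\P_W \to \P^+_W$ is a semigroup isomorphism. I would check this in three steps. Surjectivity is automatic from the very definition of $\P^+_W$ as the image of $\iota$. For injectivity, I use that $C$ is a fundamental domain for the $W$-action on $\r^n$: every $W$-orbit meets $C$, so for any $W$-invariant convex polytope $\Delta$ one has $\Delta = \textup{conv}(W \cdot (\Delta \cap C))$, and $\Delta$ is recovered from $\Delta \cap C$. The remaining step---checking the homomorphism property---comes down to the identity
\[ (\Delta_1 + \Delta_2) \cap C = (\Delta_1 \cap C) + (\Delta_2 \cap C) \quad \text{for all } \Delta_1, \Delta_2 \in \P_W. \]

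The main obstacle lies in this last identity. The inclusion ``$\supseteq$'' is immediate: $C$ is a convex cone, hence closed under Minkowski addition, and both summands obviously sit inside $\Delta_1 + \Delta_2$. The reverse inclusion ``$\subseteq$'' is subtler: given $z \in C$ together with some decomposition $z = x + y$ with $x \in \Delta_1$ and $y \in \Delta_2$, one must produce an \emph{alternative} decomposition $z = x' + y'$ with both $x', y' \in C$. I expect to settle this by a direct convexity argument exploiting the $W$-invariance of $\Delta_1$ and $\Delta_2$---essentially a Kostant-type observation to the effect that the Minkowski sum of two $W$-invariant convex bodies, intersected with the positive chamber, acquires no new points beyond those obtainable as sums of chamber points of the summands. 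Once this is in hand, the theorem follows by invoking Theorem~\ref{th-20}.
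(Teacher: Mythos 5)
Your proposal follows exactly the paper's route: Theorem \ref{th-21} is obtained from Theorem \ref{th-20} by composing with the map $\Delta \mapsto \Delta \cap C$, which the paper simply declares (``it is easy to see'') to be a semigroup isomorphism $\P_W \to \P^+_W$ without further argument. You have correctly isolated the one genuinely nontrivial point, the inclusion $(\Delta_1+\Delta_2)\cap C \subseteq (\Delta_1\cap C)+(\Delta_2\cap C)$ for $W$-invariant polytopes; this identity is indeed true (replace $x,y$ in $z=x+y$ by their dominant representatives to reduce to $W$-orbit polytopes, then use the dominance-order description of $\textup{conv}(W\lambda)\cap C$), so your argument is complete to exactly the same degree as the paper's, which leaves this same convexity lemma to the reader.
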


\section{Subspaces of matrix elements for reductive groups} \label{sec-matrix-elements-reductive}
The subspaces $L_A$ appearing in the Kazarnovskii theorem
can be realized in an alternative way: they are in fact the subspaces of matrix elements
of representations of $G$. Let $\pi$ be a finite dimensional representation.
Denote by $L_\pi \subset \c[G]$ the linear subspace
spanned by the matrix elements of the representation $\pi$.
It is invariant under the left action (as well as the right action)
of $G$ on $\c[G]$. We will regard it as a $G$-submodule of $\c[G]$ (for the left action).
\begin{Prop} \label{prop-22}
One has $L_\pi = L_A$ where $A = \Spec(\pi)$.
\end{Prop}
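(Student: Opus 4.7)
The plan is to compare both sides against the algebraic Peter--Weyl decomposition of $\c[G]$, namely
\[
\c[G] = \bigoplus_\lambda V_\lambda^* \otimes V_\lambda,
\]
where the sum runs over dominant weights and the left $G$-action acts on the first factor. Under this identification the $V_\lambda$-isotypic component of $\c[G]$ (for the left action) is precisely the span of all matrix elements of $V_\lambda$. This is the one nontrivial ingredient; everything else is bookkeeping.

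First I would show $L_\pi \subseteq L_A$. Decompose $\pi$ into irreducibles $\pi = \bigoplus_i V_{\lambda_i}^{\oplus m_i}$ with $\{\lambda_i\} = A = \Spec(\pi)$. A matrix element of $\pi$ is a linear combination of matrix elements of the $V_{\lambda_i}$, so $L_\pi$ is the sum $\sum_{\lambda \in A} M_\lambda$, where $M_\lambda$ denotes the span of matrix elements of $V_\lambda$. For any $f \in M_\lambda$, the smallest $G$-invariant subspace $L_f$ lies inside $M_\lambda$, which by Peter--Weyl is isomorphic to a direct sum of copies of $V_\lambda$; hence $\Spec(f) \subseteq \{\lambda\} \subseteq A$, giving $f \in L_A$.

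Conversely, for $L_A \subseteq L_\pi$, take any $f \in L_A$. Then $L_f$ decomposes as $\bigoplus_{\lambda \in A} V_\lambda^{\oplus n_\lambda}$ for some multiplicities $n_\lambda$, so $L_f \subseteq \bigoplus_{\lambda \in A} M_\lambda$. But the previous step showed exactly that $\bigoplus_{\lambda \in A} M_\lambda = L_\pi$, so $f \in L_\pi$. Combining the two inclusions yields $L_\pi = L_A$.

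The main substantive step is the identification of $M_\lambda$ with the $V_\lambda$-isotypic component of $\c[G]$ under the left action, which follows from Peter--Weyl (or equivalently from the fact that $\c[G]$ is a rational $G$-module and that any nonzero $G$-homomorphism $V_\lambda \to \c[G]$ is obtained by fixing a vector $v^* \in V_\lambda^*$ and sending $v \mapsto (g \mapsto v^*(g^{-1}v))$). Once this is in hand, the proposition reduces to the observation that $L_\pi$ depends only on the set of irreducibles appearing in $\pi$, not on their multiplicities, and that this set is by definition $\Spec(\pi) = A$.
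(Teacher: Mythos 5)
The paper offers no proof of Proposition \ref{prop-22} at all --- it is stated as a standard fact, with the relevant input (``every $V_\lambda$ appears in the left regular representation with multiplicity $\dim V_\lambda$'') only quoted afterwards to justify Proposition \ref{prop-24} --- so you are supplying an argument where the authors supply none, and the argument you give is the canonical one and is essentially complete: Peter--Weyl identifies the span $M_\lambda$ of matrix elements of an irreducible as an isotypic piece of $\c[G]$, whence $L_\pi=\sum_{\lambda\in\Spec(\pi)}M_\lambda$ depends only on $\Spec(\pi)$, and conversely $\Spec(f)\subset A$ forces $L_f$, hence $f$, into $\bigoplus_{\lambda\in A}M_\lambda$. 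The one point to watch is which tensor factor the left action $g\cdot f(h)=f(g^{-1}h)$ moves: with matrix entries in the usual sense $g\mapsto \xi(\pi(g)v)$, the left action acts on $\xi\in V_\lambda^*$, so $M_\lambda$ is the $V_{\lambda^*}$-isotypic component, not the $V_\lambda$-isotypic one; your equivariant map $v\mapsto\bigl(g\mapsto v^*(g^{-1}v)\bigr)$ does produce copies of $V_\lambda$, but its image consists of matrix elements of $V_\lambda^*$ rather than of $V_\lambda$. Taken literally this gives $L_\pi=L_{A^*}$ with $A^*=-w_0(A)$. This is a pure bookkeeping/convention issue that the paper elides in exactly the same way (it is harmless downstream, since $\Delta_W(A^*)=-\Delta_W(A)$ and both the volume and the integral of the even, $W$-invariant polynomial $\phi_W^2$ are unchanged under $\Delta\mapsto-\Delta$), so it does not invalidate your proof; just be aware that the sentence ``the $V_\lambda$-isotypic component is the span of matrix elements of $V_\lambda$'' is where a dual is silently being absorbed.
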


\begin{Cor} \label{cor-23}
If two representations $\pi_1$ and $\pi_2$ are spectrally equivalent then $L_{\pi_1} = L_{\pi_2}$.
\end{Cor}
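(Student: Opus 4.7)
The plan is to deduce this corollary as an immediate consequence of Proposition \ref{prop-22}, which identifies the subspace $L_\pi$ spanned by the matrix elements of a representation $\pi$ with the subspace $L_A$ indexed by the spectrum $A = \Spec(\pi)$.

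First I would unwind the definition of spectral equivalence: by the definition given in Section \ref{sec-main-spectrally-equiv}, $\pi_1$ and $\pi_2$ are spectrally equivalent precisely when $\Spec(\pi_1) = \Spec(\pi_2)$ as subsets of $C \cap \z^n$ (only the underlying set of irreducible summands matters; multiplicities may differ). Call this common set $A$.

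Next, I would apply Proposition \ref{prop-22} to each representation separately. This gives $L_{\pi_1} = L_{\Spec(\pi_1)} = L_A$ and likewise $L_{\pi_2} = L_{\Spec(\pi_2)} = L_A$. Since both subspaces equal the same $L_A$ (which depends only on the underlying set of dominant weights by the definition in Section \ref{sec-Bern-Kush}), we conclude $L_{\pi_1} = L_{\pi_2}$, finishing the proof.

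There is no real obstacle here; the statement is essentially a restatement of Proposition \ref{prop-22} combined with the definition of spectral equivalence. The only substantive content lies in Proposition \ref{prop-22} itself, which has already been established. The corollary is just the observation that the assignment $\pi \mapsto L_\pi$ factors through the spectrum and in particular descends to a well-defined map from the semigroup $\R_{\Spec}(G)$ of spectral equivalence classes into the semigroup of $G$-invariant finite dimensional subspaces of $\c[G]$.
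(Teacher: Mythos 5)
Your proof is correct and matches the paper's intent exactly: the corollary is stated as an immediate consequence of Proposition \ref{prop-22}, since spectrally equivalent representations have the same spectrum $A$ and hence both spaces of matrix elements equal $L_A$. Nothing further is needed.
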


It is well-known that every irreducible representation $V_\lambda$ appears in the
(left) regular representation of $G$ on $\c[G]$ with multiplicity equal to $\dim(V_\lambda)$.
From this we get the following.
\begin{Prop} \label{prop-24}
Let $L_\pi = \sum_\lambda m_\lambda V_\lambda$ be a decomposition of $L_\pi$ into
irreducible representations. Put $A = \Spec(\pi)$. Then: 1) if $\lambda \in A$ we have $m_\lambda = \dim (V_\lambda) = F_W(\lambda)$;
2) if $\lambda \notin A$ then $m_\lambda = 0$.
\end{Prop}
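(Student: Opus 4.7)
The plan is to reduce everything to the Peter--Weyl decomposition of $\c[G]$ by first invoking Proposition \ref{prop-22} to replace $L_\pi$ by $L_A$. Concretely, by Peter--Weyl the left regular representation decomposes as
$$\c[G] = \bigoplus_\mu V_\mu^{\oplus \dim V_\mu},$$
where $\mu$ runs over all dominant weights. Since $L_A$ is a finite dimensional $G$-submodule of $\c[G]$, its isotypic components are contained in those of $\c[G]$, so $L_A = \bigoplus_\mu V_\mu^{\oplus m_\mu}$ with $0 \le m_\mu \le \dim V_\mu$, and our task is to determine $m_\mu$.

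For part 2), I would argue by contradiction: suppose $m_\lambda > 0$ for some $\lambda \notin A$, and pick a nonzero $f$ inside a copy of $V_\lambda$ sitting in $L_A$. The smallest $G$-invariant subspace $L_f \subset \c[G]$ generated by $f$ contains this copy of $V_\lambda$, hence $\lambda \in \Spec(f)$. But by definition of $L_A$, every $f \in L_A$ satisfies $\Spec(f) \subset A$, contradicting $\lambda \notin A$. Therefore $m_\lambda = 0$ for $\lambda \notin A$.

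For part 1), fix $\lambda \in A$ and take any copy $V \subset \c[G]$ isomorphic to $V_\lambda$. For every nonzero $f \in V$, the $G$-submodule $L_f$ equals $V$, so $\Spec(f) = \{\lambda\} \subset A$, which by definition means $f \in L_A$; hence $V \subset L_A$. Applying this to each of the $\dim V_\lambda$ copies of $V_\lambda$ appearing in $\c[G]$ by Peter--Weyl, we conclude that all of them lie in $L_A$, so $m_\lambda \ge \dim V_\lambda$. Combined with the a priori upper bound $m_\lambda \le \dim V_\lambda$ from Peter--Weyl, this forces $m_\lambda = \dim V_\lambda$, and the Weyl dimension formula rewrites this as $F_W(\lambda)$.

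There is no real obstacle here; the statement is essentially a direct book-keeping consequence of Proposition \ref{prop-22}, the definition of $L_A$, and Peter--Weyl. The only thing to be careful about is the direction of the inclusions: one inclusion uses that $L_A$ is defined as the full span of functions with spectrum in $A$ (so it absorbs every copy of $V_\lambda$ for $\lambda \in A$), while the other uses that any element of $L_A$ has spectrum contained in $A$ (ruling out $\lambda \notin A$).
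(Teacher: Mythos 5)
Your argument is correct and follows exactly the route the paper intends: the paper derives Proposition \ref{prop-24} directly from the fact that each $V_\lambda$ occurs in the left regular representation $\c[G]$ with multiplicity $\dim(V_\lambda)$ (algebraic Peter--Weyl), which is precisely the decomposition you use. Your write-up merely makes explicit the book-keeping (that the $\lambda$-isotypic component of $\c[G]$ is wholly contained in $L_A$ for $\lambda\in A$ and meets it trivially for $\lambda\notin A$) that the paper leaves to the reader.
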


From Proposition \ref{prop-24} and the Weyl dimension formula we obtain:
\begin{Cor} \label{cor-25}
For any finite nonempty subset $A \subset C \cap \z^n$, $\dim(L_A) = \sum_{\lambda \in A} F_W^2(\lambda)$.
\end{Cor}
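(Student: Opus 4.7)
The plan is to assemble the corollary directly from the two ingredients just established: Proposition \ref{prop-24}, which pins down the multiplicities $m_\lambda$ in the decomposition of $L_A$ into irreducibles, and the Weyl dimension formula, which evaluates $\dim(V_\lambda)$ as $F_W(\lambda)$.

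More concretely, I would proceed in three short steps. First, choose any representation $\pi$ with $\Spec(\pi) = A$ (which exists by Proposition \ref{prop-17}(1) applied to the $W$-orbit of $A$, or more directly by taking $\pi = \bigoplus_{\lambda \in A} V_\lambda$); by Proposition \ref{prop-22} we then have $L_A = L_\pi$. Second, write $L_\pi$ as a direct sum of irreducibles, $L_\pi = \bigoplus_\lambda m_\lambda V_\lambda$, and invoke Proposition \ref{prop-24} to conclude $m_\lambda = F_W(\lambda)$ for $\lambda \in A$ and $m_\lambda = 0$ otherwise. Third, take dimensions:
\begin{equation*}
\dim(L_A) = \sum_{\lambda \in A} m_\lambda \dim(V_\lambda) = \sum_{\lambda \in A} F_W(\lambda) \cdot F_W(\lambda),
\end{equation*}
where the second equality uses the Weyl dimension formula $\dim(V_\lambda) = F_W(\lambda)$ once for the multiplicity and once for the dimension of the irreducible factor itself. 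This gives the claimed formula $\dim(L_A) = \sum_{\lambda \in A} F_W^2(\lambda)$.

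There is essentially no obstacle here: the corollary is a direct bookkeeping consequence of Proposition \ref{prop-24}, whose content is precisely that each $V_\lambda$ with $\lambda \in A$ appears inside $L_A$ with multiplicity equal to its own dimension (which is the content of the Peter--Weyl style decomposition of the regular representation mentioned just before Proposition \ref{prop-24}). The only thing to be careful about is to use the Weyl dimension formula twice in the same sum, which is what produces the square $F_W^2(\lambda)$ rather than just $F_W(\lambda)$.
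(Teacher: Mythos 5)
Your proposal is correct and matches the paper's (implicit) argument exactly: the corollary is stated as an immediate consequence of Proposition \ref{prop-24} and the Weyl dimension formula, and your computation $\dim(L_A) = \sum_{\lambda \in A} m_\lambda \dim(V_\lambda) = \sum_{\lambda \in A} F_W(\lambda)^2$ is precisely the intended bookkeeping. The only addition you make is to note explicitly that one should pick $\pi$ with $\Spec(\pi) = A$ so that $L_A = L_\pi$, which is a harmless and reasonable clarification.
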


The following is straight forward to verify.
\begin{Prop} \label{prop-26}
For any two representations $\pi_1$, $\pi_2$ one has
$L_{\pi_1 \otimes \pi_2} = L_{\pi_1} L_{\pi_2}$.
\end{Prop}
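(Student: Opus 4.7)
The plan is to unfold the definition of matrix elements and use the standard identification of matrix coefficients of a tensor product with products of matrix coefficients of the factors. Recall that a matrix element of a finite dimensional representation $\pi: G \to \GL(V)$ can be written intrinsically as the function $m_{v,v^*}(g) = \langle v^*, \pi(g) v\rangle$ for $v \in V$, $v^* \in V^*$, and $L_\pi$ is precisely the linear span of all such functions as $v, v^*$ vary (equivalently, fix bases and take the span of the matrix entries $g \mapsto \pi(g)_{ij}$).

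The first step is the identity
\[
m_{v_1 \otimes v_2,\, v_1^* \otimes v_2^*}(g) \;=\; \langle v_1^* \otimes v_2^*,\ (\pi_1 \otimes \pi_2)(g)(v_1 \otimes v_2)\rangle \;=\; m_{v_1, v_1^*}(g)\cdot m_{v_2, v_2^*}(g),
\]
which is immediate from the definition of the tensor product representation. This shows that the generating matrix elements of $\pi_1 \otimes \pi_2$ coming from decomposable tensors $v_1 \otimes v_2$ and $v_1^* \otimes v_2^*$ are products of matrix elements of $\pi_1$ and $\pi_2$, hence they lie in $L_{\pi_1} L_{\pi_2}$. Since decomposable tensors span $V_1 \otimes V_2$ and $V_1^* \otimes V_2^* \cong (V_1 \otimes V_2)^*$, and the assignment $(v,v^*) \mapsto m_{v,v^*}$ is bilinear, every matrix element of $\pi_1 \otimes \pi_2$ is a linear combination of such products. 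This gives the inclusion $L_{\pi_1 \otimes \pi_2} \subseteq L_{\pi_1} L_{\pi_2}$.

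Conversely, any product $f_1 f_2$ with $f_i \in L_{\pi_i}$ can be written, by bilinearity, as a linear combination of products $m_{v_1,v_1^*} \cdot m_{v_2,v_2^*}$, each of which by the displayed identity equals a matrix element of $\pi_1 \otimes \pi_2$. Hence $L_{\pi_1} L_{\pi_2} \subseteq L_{\pi_1 \otimes \pi_2}$, and the two subspaces coincide.

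There is no real obstacle here; the statement is essentially an unwinding of the definitions. The only point that requires a brief argument is the passage from decomposable tensors to the whole tensor product spaces, which is handled by bilinearity of the matrix-element pairing $(v,v^*) \mapsto m_{v,v^*}$.
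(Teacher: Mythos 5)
Your argument is correct and is exactly the standard verification the paper has in mind; the paper itself gives no proof, merely calling the statement ``straight forward to verify.'' The identity $m_{v_1\otimes v_2,\,v_1^*\otimes v_2^*}=m_{v_1,v_1^*}\cdot m_{v_2,v_2^*}$ together with the spanning of $V_1\otimes V_2$ and $(V_1\otimes V_2)^*$ by decomposable tensors settles both inclusions, as you show.
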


\begin{Def}
According to Proposition \ref{prop-26} the subspaces $L_A$ form a semigroup (with respect to
product of subspaces). We call this semigroup
the {\it semigroup of matrix elements of $G$} and denote it by $K_{\textup{mat}}[G]$.
\end{Def}

\begin{Th} \label{th-K-mat}
The semigroup $K_{\textup{mat}}[G]$ is isomorphic to $\R_{\Spec}(G)$ and its Grothendieck semigroup
is isomorphic to $\P^+_W$. The map $\rho: K_{\textup{mat}}[G] \to \P^+_W$ is given by
$\rho(L_A) = \Delta^+_W(A)$. The completion of $L_A$ is $L_B$ where $B=\Delta^+_W(A)_\z$.
\end{Th}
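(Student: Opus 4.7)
My plan is to build the proof in three stages, leveraging Theorem \ref{th-21} and Proposition \ref{prop-1} as the main inputs. First I would identify $K_{\textup{mat}}[G]$ with $\R_{\Spec}(G)$ via the assignment $\pi \mapsto L_\pi$: this map descends to spectral equivalence classes by Proposition \ref{prop-22} and Corollary \ref{cor-23}, is a semigroup homomorphism by Proposition \ref{prop-26}, is surjective by definition of $K_{\textup{mat}}[G]$ (any $L_A$ equals $L_\pi$ for $\pi = \bigoplus_{\lambda \in A} V_\lambda$), and is injective because Proposition \ref{prop-24} recovers $A$ from the left $G$-module structure of $L_A$. Combining this identification with Theorem \ref{th-21} immediately yields both the Grothendieck semigroup description and the formula $\rho(L_A) = \Delta^+_W(A)$.

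Next, setting $B = \Delta^+_W(A)_\z$, I would verify $L_A \sim L_B$. From $A \subseteq B \subseteq \Delta_W(A)$ and the $W$-invariance of $\Delta_W(A)$ one gets $\Delta_W(A) = \Delta_W(B)$, hence $\Delta^+_W(A) = \Delta^+_W(B)$, so $\rho(L_A) = \rho(L_B)$. Therefore $L_A$ and $L_B$ are analogous in $K_{\textup{mat}}[G]$, and a fortiori analogous in the ambient semigroup $K[G]$, so Proposition \ref{prop-1}(3) delivers $L_B \subseteq \overline{L_A}$.

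The main step is the reverse inclusion $\overline{L_A} \subseteq L_B$. For this I would extend the left $G$-action to the two-sided action of $G \times G$ on $\c[G]$ given by $((g_1,g_2) \cdot f)(h) = f(g_1^{-1} h g_2)$. By Peter--Weyl, $\c[G] \cong \bigoplus_\lambda V_\lambda^* \otimes V_\lambda$ as a direct sum of pairwise non-isomorphic irreducible $G \times G$-modules, so the $G \times G$-stable finite-dimensional subspaces of $\c[G]$ are precisely the $L_S$ for $S \subseteq C \cap \z^n$; in particular $L_A$ itself is $G \times G$-stable. A direct check shows that $\overline{L}$ is $G \times G$-stable whenever $L$ is: acting by $(g_1,g_2)$ on an integral relation $f^m + a_1 f^{m-1} + \cdots + a_m = 0$ with $a_i \in L^i$ produces the same relation for $(g_1,g_2) \cdot f$, with coefficients $(g_1,g_2) \cdot a_i \in L^i$. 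Hence $\overline{L_A} = L_S$ for some $S \subseteq C \cap \z^n$, and the analogousness established in the previous stage forces $\rho(L_S) = \rho(L_A)$, i.e. $S \subseteq \Delta^+_W(A)_\z = B$, giving $\overline{L_A} \subseteq L_B$. The essential ingredient is the $G \times G$-invariance of the completion, combined with the Peter--Weyl classification of $G \times G$-submodules of $\c[G]$; everything else is organised by the Grothendieck-semigroup machinery already in place.
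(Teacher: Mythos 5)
Your handling of the first two assertions (the isomorphism $\R_{\Spec}(G)\cong K_{\textup{mat}}[G]$ and the identification of its Grothendieck semigroup with $\P^+_W$ via Theorem \ref{th-21}) follows the paper's route exactly, citing the same ingredients (Corollary \ref{cor-23}, Propositions \ref{prop-24} and \ref{prop-26}). For the completion statement the paper says only that it ``follows from Proposition \ref{prop-1}''; in fact Proposition \ref{prop-1} directly yields only the inclusion $L_B\subseteq\overline{L_A}$ (your second stage), and your third stage --- the $G\times G$-stability of $\overline{L_A}$ together with the Peter--Weyl classification of finite dimensional $G\times G$-stable subspaces of $\c[G]$, forcing $\overline{L_A}=L_S$ for some finite $S\subset C\cap\z^n$ --- supplies exactly the half the paper leaves implicit. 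That is a genuine and correct addition.

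There is, however, one gap in your final step. Having shown $\overline{L_A}=L_S$, you conclude $\rho(L_S)=\rho(L_A)$ from ``the analogousness established in the previous stage.'' But the previous stage only gave $L_A\sim L_B$, and what you know about $L_S$ is that it is analogous to $L_A$ \emph{in the ambient semigroup} $K[G]$ (Proposition \ref{prop-1}(2)), i.e. $L_S M=L_A M$ for some $M\in K[G]$ that need not lie in $K_{\textup{mat}}[G]$; the map $\rho$ is the Grothendieck homomorphism of $K_{\textup{mat}}[G]$ and is a priori only constant on classes of subspaces analogous \emph{within} $K_{\textup{mat}}[G]$. Two elements of a subsemigroup can in principle be analogous in the ambient semigroup without being analogous in the subsemigroup, so the deduction $\Delta^+_W(S)=\Delta^+_W(A)$ needs justification. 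The gap closes with the tools you already set up: replace $M$ by its $G\times G$-span, which equals $L_D$ for some finite $D$. Since $L_S$ and $L_A$ are $G\times G$-stable and the action is by algebra automorphisms, $L_S L_D$ and $L_A L_D$ are precisely the $G\times G$-spans of $L_S M$ and $L_A M$, hence coincide; thus $L_S\sim L_A$ already in $K_{\textup{mat}}[G]$, Theorem \ref{th-21} gives $\Delta^+_W(S)=\Delta^+_W(A)$, and therefore $S\subseteq\Delta^+_W(S)_\z=B$. With this patch your argument is complete.
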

\begin{proof}
According to Corollary \ref{cor-23} and Propositions \ref{prop-24} and \ref{prop-26}, the
map $\pi \mapsto L_\pi$ is an isomorphism of semigroups $\R_{\Spec}(G)$ and
$K_{\textup{mat}}[G]$. Then by Theorem \ref{th-21} the
Grothendieck semigroup of $K_{\textup{mat}}[G]$ is isomorphic to $\P^+_W$. The equality $\overline{L_A}
= L_B$ follows from Proposition \ref{prop-1}.
\end{proof}

We can now prove the Kazarnovskii theorem.
As in the Bernstein-Ku\v{s}nirenko theorem, first we prove it for the self-intersection index.
\begin{Lem}[Analogue of the Ku\v{s}nirenko theorem for a group $G$] \label{lem-28}
For any finite nonempty set $A \subset C \cap \z^n$,
the self-intersection index $[L_A, \ldots, L_A]$ is equal to $m! I\phi^2_W(\Delta^+_W(A))$.
\end{Lem}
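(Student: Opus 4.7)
The plan is to mirror the proof of the Kuǒnirenko theorem given earlier in the paper, using Theorem \ref{th-intersec-index-Hilbert-function} to reduce the self-intersection index to the asymptotic of a Hilbert function, and then evaluating this asymptotic as a Riemann sum with $\phi_W^2$ as the leading term.

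First I would invoke Theorem \ref{th-intersec-index-Hilbert-function}, which says
\[
[L_A,\dots,L_A]=m!\,\lim_{k\to\infty}\frac{H_{L_A}(k)}{k^m},\qquad H_{L_A}(k)=\dim\bigl(\overline{L_A^{\,k}}\bigr).
\]
Next I would identify the completion $\overline{L_A^{\,k}}$. By Proposition \ref{prop-26}, $L_A^{\,k}$ is the space of matrix elements of the $k$-th tensor power of any representation with spectrum $A$. Combining Proposition \ref{prop-19} (additivity of the weight polytope under tensor product) with Theorem \ref{th-K-mat}, one gets $\overline{L_A^{\,k}}=L_{B_k}$ where $B_k=\bigl(k\Delta^+_W(A)\bigr)_\z$. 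Then Corollary \ref{cor-25} evaluates the dimension:
\[
H_{L_A}(k)=\dim L_{B_k}=\sum_{\lambda\in B_k}F_W^2(\lambda).
\]

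The main step is the asymptotic evaluation
\[
\lim_{k\to\infty}\frac{1}{k^m}\sum_{\lambda\in (k\Delta^+_W(A))_\z}F_W^2(\lambda)=I\phi_W^2\bigl(\Delta^+_W(A)\bigr).
\]
This is a Riemann-sum argument. Writing $\Delta=\Delta^+_W(A)$, as $k\to\infty$ the sum over lattice points of $k\Delta$ of the polynomial $F_W^2$ of degree $m-n$ is asymptotic to the integral $\int_{k\Delta}F_W^2(x)\,dx$, with error of lower order in $k$ (this is where one uses that $\Delta$ is a compact polytope and $F_W^2$ is a polynomial, so standard Euler--Maclaurin / Riemann-sum bounds apply). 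Next, since $F_W=\phi_W+(\text{lower degree terms})$, the integral $\int_{k\Delta}F_W^2\,dx$ differs from $\int_{k\Delta}\phi_W^2\,dx$ by a polynomial in $k$ of degree strictly less than $m$. Finally, the change of variables $x=ky$ gives $\int_{k\Delta}\phi_W^2(x)\,dx=k^m\int_\Delta\phi_W^2(y)\,dy=k^m\,I\phi_W^2(\Delta)$, since $\phi_W^2$ is homogeneous of degree $m-n$ and $dx=k^n\,dy$. Dividing by $k^m$ and passing to the limit yields the claim.

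Putting the pieces together with Theorem \ref{th-intersec-index-Hilbert-function} gives $[L_A,\dots,L_A]=m!\,I\phi_W^2(\Delta^+_W(A))$. The only non-routine step is the Riemann-sum estimate, which is standard but must be stated carefully because $F_W^2$ is a polynomial of positive degree (unlike in the torus case where one just counts lattice points); the key input making this work is the identification $\overline{L_A^{\,k}}=L_{B_k}$ provided by Theorem \ref{th-K-mat}, which converts the a priori non-elementary object $H_{L_A}(k)$ into an explicit polynomial sum over lattice points.
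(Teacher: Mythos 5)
Your proposal is correct and follows essentially the same route as the paper's own proof: identify $\overline{L_A^{\,k}}=L_{(k\Delta^+_W(A))_\z}$ via Theorem \ref{th-K-mat}, compute the dimension by Corollary \ref{cor-25}, evaluate the asymptotics of $\sum F_W^2(\lambda)$ as $k^m\int_{\Delta^+_W(A)}\phi_W^2\,d\mu$, and conclude by Theorem \ref{th-intersec-index-Hilbert-function}. You merely spell out the Riemann-sum/homogeneity step in more detail than the paper, which simply asserts the asymptotic and refers to \cite{Askold-Kiumars-Newton-Okounkov}.
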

\begin{proof}
According to Theorem \ref{th-K-mat} we have $\overline{L_A^k} = L_{B_k}$ where $B_k = (k\Delta^+_W(A))_\z$. By
Corollary \ref{cor-25} the dimension of $L_{B_k}$
is equal to $\sum_{\lambda \in B_k} F^2_W(\lambda)$.
Put $H(k) = \dim(L_{B_k})$. One sees that, as $k \to \infty$,
the sum $\sum_{\lambda \in B_k} F^2_W(\lambda)$ asymptotically is equal to
$k^m \int_{\Delta^+_W(A)} \phi^2_W d\mu$, because the polynomial $F^2_W$ has
degree $m-n$ and its homogeneous component of highest degree is $\phi^2_W$.
It follows that
$\lim_{k \to \infty} H(k)/k^m$ is equal to $\int_{\Delta^+_W(A)} \phi^2_W d\mu$.
(See \cite[Part I]{Askold-Kiumars-Newton-Okounkov} for a more detailed study of asymptotics of
such kind.) We can now conclude the lemma from Theorem \ref{th-intersec-index-Hilbert-function}.
\end{proof}

\begin{proof}[Proof of the Kazarnovskii theorem]
The Kazarnovskii theorem follows from Lemma \ref{lem-28}
and the multi-linearity of the mixed integral of a homogeneous polynomial as well as the multi-linearity
of the intersection index.
\end{proof}

\section{Intersection index as mixed volume} \label{sec-G-C}
In this section we see how to rewrite the formula in the Kazarnovskii theorem as
a mixed volume of certain polytopes (instead of mixed integral). To this end, we
use the so-called {\it Gelfand-Cetlin polytopes}.

In their classical paper \cite{G-C}, Gelfand and Cetlin constructed
a natural basis for any irreducible representation of $\GL(n, \c)$
and showed how to parameterize the elements of this basis with
integral points in a certain convex polytope. These polytopes are
called the {\it Gelfand-Cetlin polytopes}. Since then similar
constructions have been done for other classical groups and
analogous polytopes were defined (see \cite{B-Z1}). We will also
call them  {\it Gelfand-Cetlin polytopes} or for short {\it G-C
polytopes}. Consider the list of groups $\c^*$, $\SL(n_1, \c)$, $\SO(n_2,
\c)$ and $\SP(2n_3, \c)$, for any $n_1, n_2, n_3 \in \n$.
We will say that $G$ is a {\it classical group}
if $G$ is in this list, or if $G$ can be constructed from the
groups in the list using the operations of taking direct product
and/or taking quotient by a finite central subgroup.
In this sense, the general linear group and the
orthogonal group are classical groups.

Let $G$ be a classical group. As usual let $m = \dim(G)$, and we
identify the weight lattice of $G$ with $\z^n$, its real span by
$\r^n$, and denote the positive Weil chamber by $C$. In summary
we have:
\begin{Th}[G-C polytopes] \label{th-G-C-additive}
For any classical group $G$ and for any $\lambda \in C$ one
can explicitly construct a polytope $\Delta_{GC}(\lambda) \subset
\r^{(m-n)/2}$, called the Gelfand-Cetlin polytope of $\lambda$, with
the following properties:
\begin{enumerate}
\item If $\lambda$ is integral then the dimension of $V_\lambda$ is equal to the number of
integral points in $\Delta_{GC}(\lambda)$.
\item The map $\lambda \mapsto \Delta_{GC}(\lambda)$ is linear, i.e.
for any two $\lambda, \gamma \in C$ and $c_1, c_2 \geq 0$ we
have $= \Delta_{GC}(c_1\lambda + c_2\gamma) =
c_1\Delta_{GC}(\lambda) + c_2\Delta_{GC}(\gamma).$
\end{enumerate}
\end{Th}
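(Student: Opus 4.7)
The plan is to prove Theorem \ref{th-G-C-additive} by reducing to a finite list of basic cases, appealing to explicit constructions in the literature, and then verifying the two listed properties separately. The structural reduction exploits the definition of classical group. If $G = G_1 \times G_2$ is a direct product, then an irreducible representation factors as $V_{(\lambda_1, \lambda_2)} = V_{\lambda_1} \boxtimes V_{\lambda_2}$, and we set $\Delta_{GC}(\lambda_1, \lambda_2) = \Delta_{GC}(\lambda_1) \times \Delta_{GC}(\lambda_2)$; the integer point count factors as a product and linearity in $\lambda$ follows coordinatewise. If $G = \tilde{G}/Z$ is a quotient by a finite central subgroup, then the weight lattice of $G$ embeds in that of $\tilde{G}$ and an irreducible representation of $G$ is also an irreducible representation of $\tilde{G}$ with the same highest weight, so we simply reuse the polytope constructed for $\tilde{G}$.

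Next, for each basic building block I would invoke an explicit construction. For $\c^*$ the irreducible representations are one dimensional and $\Delta_{GC}(\lambda)$ is a point in $\r^0$. For $\SL(n_1, \c)$, I use the original Gelfand-Cetlin construction of \cite{G-C}: given a dominant weight $\lambda = (\lambda_1 \geq \cdots \geq \lambda_{n_1})$, the polytope lives in the space of triangular arrays $(x_{i,j})$ whose top row is $\lambda$ and is cut out by the interlacing inequalities $x_{i,j} \geq x_{i+1,j} \geq x_{i,j+1}$. For $\SO(n_2, \c)$ and $\SP(2n_3, \c)$ I would use the analogous explicit inequality descriptions of Berenstein--Zelevinsky \cite{B-Z1}.

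Property (1) in each basic case is precisely the statement that the Gelfand-Cetlin basis of $V_\lambda$ is parametrized by integral points of $\Delta_{GC}(\lambda)$, which is exactly the content of the cited constructions. The substantive verification is property (2), the Minkowski linearity. Here the key observation is uniform across all basic types: the defining inequalities of $\Delta_{GC}(\lambda)$ all have the form $Ax \leq B\lambda$ for fixed integer matrices $A, B$ (they are linear and homogeneous in the combined variable $(\lambda, x)$). The inclusion $c_1 \Delta_{GC}(\lambda) + c_2 \Delta_{GC}(\gamma) \subseteq \Delta_{GC}(c_1 \lambda + c_2 \gamma)$ is then immediate by adding the defining inequalities with weights $c_1$ and $c_2$. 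For the reverse inclusion, I would show that the vertices of $\Delta_{GC}(\lambda)$ are obtained by resolving each interior inequality as an equality according to a fixed combinatorial pattern; this expresses every vertex as a linear function $v(\lambda)$ whose coefficients are independent of $\lambda$ (they depend only on the combinatorial choice). Minkowski additivity of vertex sets then yields $c_1 \Delta_{GC}(\lambda) + c_2 \Delta_{GC}(\gamma) \supseteq \Delta_{GC}(c_1 \lambda + c_2 \gamma)$.

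The main obstacle I expect is the uniform verification of property (2) across the three non-torus building blocks: the inclusion $\subseteq$ is automatic from the cone-section picture, but the reverse inclusion requires the explicit combinatorial description of vertices. The symplectic and orthogonal cases carry more intricate interlacing patterns and auxiliary inequalities than $\SL$, so the enumeration of vertex types (equivalently, checking that the auxiliary polyhedral cone $\{(\lambda, x) : Ax \leq B\lambda\}$ surjects onto $C$ with linearly parametrized fibers) is the bookkeeping-heavy step; nevertheless the structural argument is the same in all cases.
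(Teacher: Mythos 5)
Your proposal is correct and follows essentially the same route as the paper, which presents Theorem \ref{th-G-C-additive} as a summary of the explicit constructions in \cite{G-C} and \cite{B-Z1} and disposes of part (2) with the single remark that it is ``an immediate corollary of the defining inequalities of the G-C polytopes.'' You simply supply the details the paper omits, and you correctly identify the only genuinely nontrivial point, namely that the reverse Minkowski inclusion $\Delta_{GC}(c_1\lambda+c_2\gamma)\subseteq c_1\Delta_{GC}(\lambda)+c_2\Delta_{GC}(\gamma)$ does not follow from superadditivity of fibers of a cone projection alone and needs the explicit linear-in-$\lambda$ description of the vertices (or support function) of the interlacing patterns.
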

The part (2) in the above theorem is an immediate corollary of the defining inequalities of
the G-C polytopes for the classical groups.

\begin{Def}
Let $A$ be a finite nonempty set of dominant weights  of $G$.
Define the polytope $\tilde{\Delta}(A) \subset C \times \r^{(m-n)}$
by:
$$\tilde{\Delta}(A) = \bigcup_{\lambda \in \Delta^+_W(A)} \{(\lambda, x, y) \mid x,y \in \Delta_{GC}(\lambda) \}.$$
In other words, the projection on the first factor maps
$\tilde{\Delta}(A)$ to the weight polytope $\Delta^+_W(A)$ and the
fibre over each $\lambda$ is the double G-C polytope
$\Delta_{GC}(\lambda) \times \Delta_{GC}(\lambda)$.
\end{Def}

\begin{Th}[Reformulation of the Kazarnovskii theorem]
Given finite nonempty subsets $A_1, \ldots, A_m \subset C \cap \z^n$, the
intersection index  $[L_{A_1}, \ldots, L_{A_m}]$ is equal to the
mixed volume $V(\tilde{\Delta}^+_W(A_1), \ldots,
\tilde{\Delta}^+_W(A_m))$ multiplied with $m!$.
\end{Th}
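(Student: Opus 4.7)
The strategy is to reduce to the Kazarnovskii theorem (already proved as Theorem \ref{th-Kazarnovskii}) by establishing the identity
$$V(\tilde{\Delta}(A_1), \ldots, \tilde{\Delta}(A_m)) = I\phi_W^2(\Delta^+_W(A_1), \ldots, \Delta^+_W(A_m)).$$
Multiplying by $m!$ then yields the claim. So the real work is to identify a mixed volume in $\r^m$ with a mixed integral over $\r^n$.

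First I would handle the case of a single argument (the ``self-volume''). Since $\tilde{\Delta}(A)$ is fibred over $\Delta^+_W(A) \subset \r^n$ with fibre $\Delta_{GC}(\lambda) \times \Delta_{GC}(\lambda)$ of dimension $m-n$, Fubini's theorem gives
$$\Vol(\tilde{\Delta}(A)) = \int_{\Delta^+_W(A)} \Vol(\Delta_{GC}(\lambda))^2 \, d\mu(\lambda).$$
The key technical point is the identification $\Vol(\Delta_{GC}(\lambda)) = \phi_W(\lambda)$ as polynomials on $C$. Using the linearity $\Delta_{GC}(k\lambda) = k\Delta_{GC}(\lambda)$ from Theorem \ref{th-G-C-additive}(2), the number of integral points of $\Delta_{GC}(k\lambda)$ is asymptotic to $k^{(m-n)/2}\Vol(\Delta_{GC}(\lambda))$ as $k \to \infty$. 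By Theorem \ref{th-G-C-additive}(1) this count equals $F_W(k\lambda)$ for integral $\lambda$, and $F_W(k\lambda)$ is asymptotic to $k^{(m-n)/2}\phi_W(\lambda)$ since $\phi_W$ is the top homogeneous component of $F_W$. Thus $\Vol(\Delta_{GC}(\lambda)) = \phi_W(\lambda)$, and substituting gives $\Vol(\tilde{\Delta}(A)) = I\phi_W^2(\Delta^+_W(A))$.

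To pass from the self-volume identity to the polarized one, I would use that the G-C linearity propagates to the fibred construction: the map $P \mapsto \tilde{P}$ defined on moment polytopes $P \subset C$ by $\tilde{P} = \bigcup_{\lambda \in P}\{\lambda\} \times \Delta_{GC}(\lambda) \times \Delta_{GC}(\lambda)$ satisfies $\widetilde{P_1 + P_2} = \tilde{P_1} + \tilde{P_2}$ (Minkowski sum in $\r^m$). Combined with the additivity $\Delta^+_W(A_1 + A_2) = \Delta^+_W(A_1) + \Delta^+_W(A_2)$ inherited from Proposition \ref{prop-19}, this makes $P \mapsto \tilde{P}$ a Minkowski-linear map from the space of moment polytopes to convex bodies in $\r^m$. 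Consequently both $P \mapsto \Vol(\tilde{P})$ and $P \mapsto I\phi_W^2(P)$ extend to homogeneous polynomials of degree $m$ on the linear space of virtual moment polytopes, and by the previous step they coincide. Their polarizations must then coincide, so that $V(\tilde{P_1}, \ldots, \tilde{P_m}) = I\phi_W^2(P_1, \ldots, P_m)$, which is exactly the required identity.

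The main obstacle is the identification $\Vol(\Delta_{GC}(\lambda)) = \phi_W(\lambda)$: the argument relies essentially on both parts of Theorem \ref{th-G-C-additive} and on a lattice-point/volume asymptotic (analogous to the one invoked in Lemma \ref{lem-28}). A secondary but crucial point is checking Minkowski-additivity of the fibred polytope, where the nontrivial inclusion requires decomposing any $(x,y) \in \Delta_{GC}(\lambda_1 + \lambda_2) \times \Delta_{GC}(\lambda_1 + \lambda_2)$ as a pointwise sum of elements of the corresponding summand fibres; this again follows from the G-C linearity.
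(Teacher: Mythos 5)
Your proposal is correct and follows essentially the same route as the paper: both reduce to the diagonal case via the Minkowski-additivity of $A \mapsto \tilde{\Delta}(A)$ coming from the linearity of $\lambda \mapsto \Delta_{GC}(\lambda)$, and both establish $\Vol(\Delta_{GC}(\lambda)) = \phi_W(\lambda)$ by the lattice-point asymptotic together with Theorem \ref{th-G-C-additive}(1) and the Weyl dimension formula, then conclude by Fubini and Lemma \ref{lem-28}. The only difference is cosmetic: the paper polarizes the intersection-index statement directly, while you polarize the polytope identity first and then invoke the Kazarnovskii theorem.
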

\begin{proof}
Because the polytope $\Delta_{GC}(\lambda)$ depends linearly on
$\lambda$, for a nonempty finite subset $A$ of dominant weights the map
$A \mapsto \tilde{\Delta}(\pi_A)$ is a linear map with respect to the
addition of subsets.
Hence to prove the theorem it is enough to verify that the self-intersection index of the
subspace $L_A$ is equal to the volume $\Vol(\tilde{\Delta}^+_W(A))$ multiplied with $m!$.
The number of integral points in $k\Delta_{GC}(\lambda) = \Delta_{GC}(k\lambda)$,
for large $k$, is asymptotically equal to the volume of $k\Delta_{GC}(\lambda)$. Now using
Theorem \ref{th-G-C-additive}(1) and the Weil dimension
formula we have $\Vol_{(m-n)/2}(\Delta_{GC}(\lambda)) = \phi_W(\lambda)$.
So the $(m-n)$-dimensional volume of $\Delta_{GC}(\lambda) \times
\Delta_{GC}(\lambda)$ is equal to $\phi_W^2(\lambda)$. The theorem
then follows from Lemma \ref{lem-28} and the Fubini theorem.
\end{proof}

\begin{Rem}
\noindent 1) The construction of $\tilde{\Delta}(\pi)$ goes back to A. Okounkov (\cite{Okounkov-spherical})
who introduced such polytopes for spherical varieties in order to answer a question posed by the second
author.

\noindent 2) The Gelfand-Cetlin approach has been generalized to any reductive group
by the works of Littelmann (\cite{Littelmann})
and Bernstein-Zelevinsky \linebreak
(\cite{B-Z2}). These are called the {\it string polytopes}. Unlike the
case of Gelfand-Cetlin polytopes, in general, the dependence of a string polytope $\Delta(\lambda)$ on
the dominant weight $\lambda$ is not linear.
\end{Rem}

For $G=\GL(n,\c)$ the construction of the polytope  $\tilde
\Delta(A)$ is especially simple and explicit.

Let $G = \GL(n, \c)$. Then $m=n^2$,  $T=(\c^*)^n$, the weight lattice is
$\z^n$ and the Weil group $W = S_n$ acts on $\r^n$ by permuting the
coordinates. The (standard) positive Weil chamber $C$ is the cone
$\{ \lambda = (\lambda_1, \dots , \lambda_n) \mid \lambda_1 \geq
\dots \geq \lambda_n \}$. We say that an $n \times n$ matrix $M=\{
x_{i,j}\}$ with real entries is {\it row-column decreasing} if  its
entries satisfy the inequalities:
\begin{itemize}
\item[(i)] $x_{i,j} \geq x_{i,j+1}$ for $j < n$,
\item[(ii)] $x_{i,j} \geq x_{i+1,j}$ for $i < n$.
\end{itemize}
Let $\mathcal{M}(n)$ be the set all $n\times n$ real row-column
decreasing matrix.

Let $\delta: \r^{n^2} \to \r^n$ be the projection which sends a real $n\times n$
matrix $M = \{x_{i,j}\}$ to its diagonal $\delta(M) =(x_{1,1},\dots, x_{n,n})$.

For the group $\GL(n,\c)$ the polytope $ \Delta_{GC}(\lambda) \times
\Delta_{GC}(\lambda)$ is the polytope in the space of $n \times n$
real matrices, consisting of all $M \in \mathcal{M}(n)$ such that
$\delta(M)= \lambda$. This follows directly from the
original work of Gelfand and Cetlin (\cite{G-C}).

\begin{Def}
For a finite nonempty set $A$ of highest weights for $\GL(n,\c)$,
define the {\it Newton polytope $\Delta_{Newt}(A)$} to be the set of
all matrices $M \in \mathcal{M}(n)$
such that $\delta(M)\in \Delta^+_W(A)$.
\end{Def}

From the defining inequalities of G-C polytopes for $\GL(n,\c)$ one
easily sees that polytope $\Delta_{Newt}(A)$ coincides with the polytope
$\tilde \Delta (A)$. Now the Kazarnovskii theorem for $G=\GL(n\c)$
can be reformulated in terms of the mixed volumes of the above
Newton polytopes:
\begin{Th}[Kazarnovskii theorem for $\GL(n, \c))$] \label{th-Kaz-GL-n}
For finite nonempty subsets $A_1, \ldots, A_{n^2} \subset C \cap \z^n$,
the intersection index $[L_{A_1},\dots,L_{A_{n^2}}]$ is equal to $(n^2)!
V(\Delta_{Newt}(A_1),\dots, \Delta_{Newt}(A_{n^2})).$
\end{Th}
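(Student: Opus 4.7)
The plan is to deduce this statement directly from the preceding reformulation of Kazarnovskii's theorem for classical groups, which expresses $[L_{A_1}, \ldots, L_{A_m}]$ as $m! \cdot V(\tilde{\Delta}(A_1), \ldots, \tilde{\Delta}(A_m))$. For $G = \GL(n, \c)$ we have $m = \dim G = n^2$, so the only thing to verify is that for each finite nonempty $A \subset C \cap \z^n$ the polytope $\tilde{\Delta}(A)$ coincides with the Newton polytope $\Delta_{Newt}(A)$ defined via row-column decreasing matrices with prescribed diagonal.

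First, I would recall the Gelfand--Cetlin realization for $\GL(n, \c)$: for $\lambda \in C$ a dominant weight, the polytope $\Delta_{GC}(\lambda) \subset \r^{n(n-1)/2}$ parametrizes a basis of $V_\lambda$, and a direct unpacking of the original defining inequalities of Gelfand and Cetlin shows that the double polytope $\Delta_{GC}(\lambda) \times \Delta_{GC}(\lambda)$ is isomorphic to the slice $\{M \in \mathcal{M}(n) \mid \delta(M) = \lambda\}$ of row-column decreasing matrices whose diagonal is $\lambda$. This identification is the key combinatorial fact; it is the only classical input needed and is indicated already in the remark preceding the theorem statement.

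Second, I would take the union of these slices over $\lambda$ running through $\Delta^+_W(A)$. By the definition of $\tilde{\Delta}(A)$ as the polytope fibred over $\Delta^+_W(A)$ with fibre $\Delta_{GC}(\lambda) \times \Delta_{GC}(\lambda)$, this union is precisely $\tilde{\Delta}(A)$. On the other hand, by the definition of $\Delta_{Newt}(A)$, this same union is the set of $M \in \mathcal{M}(n)$ with $\delta(M) \in \Delta^+_W(A)$, which is $\Delta_{Newt}(A)$. Hence $\tilde{\Delta}(A) = \Delta_{Newt}(A)$ as subsets of $\r^{n^2}$.

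Substituting this equality into the reformulated Kazarnovskii theorem then yields $[L_{A_1}, \ldots, L_{A_{n^2}}] = (n^2)! \, V(\Delta_{Newt}(A_1), \ldots, \Delta_{Newt}(A_{n^2}))$, as claimed. There is essentially no obstacle beyond correctly recognizing the Gelfand--Cetlin slice as the row-column decreasing diagonal slice; the mixed-volume statement itself is inherited from the general classical group case and requires no additional work.
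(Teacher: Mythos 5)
Your proposal is correct and follows essentially the same route as the paper: the paper likewise obtains Theorem \ref{th-Kaz-GL-n} by identifying $\Delta_{GC}(\lambda)\times\Delta_{GC}(\lambda)$ with the slice $\{M\in\mathcal{M}(n)\mid\delta(M)=\lambda\}$, concluding $\tilde\Delta(A)=\Delta_{Newt}(A)$ from the Gelfand--Cetlin defining inequalities, and then substituting into the reformulated Kazarnovskii theorem with $m=n^2$. No discrepancies to report.
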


\vspace{.2cm}
{\small
\noindent Askold G. Khovanskii\\Department of
Mathematics, University of Toronto, Toronto, Canada;
Moscow Independent Univarsity; Institute for Systems Analysis, Russian Academy of Sciences.
{\it Email:} {\sf askold@math.utoronto.ca}\\

\noindent Kiumars Kaveh\\Department of Mathematics and Statistics, McMaster University, Hamilton, Canada.
{\it Email:} {\sf kavehk@math.mcmaster.ca}\\
}


\begin{thebibliography}{99}
\bibitem[Bernstein75]{Bernstein} Bernstein, D. N.
{\it The number of roots of a system of equations}.
English translation: Functional Anal. Appl. 9 (1975), no. 3, 183--185 (1976).

\bibitem[Berenstein-Zelevinsky88]{B-Z1} Berenstein, A.; Zelevinsky, A.
{\it Tensor product multiplicities and convex polytopes in partition space}. J. Geom. Phys. 5 (1988), no. 3, 453--472.

\bibitem[Berenstein-Zelevinsky01]{B-Z2} Berenstein, A.; Zelevinsky, A.
{\it Tensor product multiplicities, canonical bases and totally
positive varieties}. Invent. Math. 143 (2001), no. 1, 77--128.

\bibitem[Brion89]{Brion} Brion, M.
{\it Groupe de Picard et nombres caracteristiques des varietes
spheriques}. [Picard group and characteristic numbers of spherical
varieties] Duke Math. J. 58 (1989), no. 2, 397-424.

\bibitem[Burago-Zalgaller88]{Burago-Zalgaller} Burago, Yu. D.; Zalgaller, V. A.
{\it Geometric inequalities}. Translated from the Russian by A. B. Sosinski\u\i.
Grundlehren der Mathematischen Wissenschaften, 285.
Springer Series in Soviet Mathematics (1988).

\bibitem[Gelfand-Cetlin50]{G-C} Gelfand, I.M.; Cetlin, M.L. {\it Finite dimensional representations of
the group of unimodular matrices}, Doklady Akad. Nauk USSR (N.S.)
,71 (1950), 825--828.

\bibitem[Kapranov97]{Kapranov} Kapranov, M. {\it Hypergeometric functions on reductive groups}.
Integrable systems and algebraic geometry (Kobe/Kyoto, 1997), 236--281, World Sci. Publ., River Edge, NJ, 1998.

\bibitem[Kaveh-Khovanskii08]{Askold-Kiumars-MMJ} Kaveh, K.; Khovanskii, A. G.
{\it Mixed volume and an extension of intersection theory of divisors}. Preprint: {\sf arXiv:0812.0433}.
To appear in Moscow Mathematical Journal.

\bibitem[Kaveh-Khovanskii09]{Askold-Kiumars-Newton-Okounkov} Kaveh, K.; Khovanskii, A. G.
{\it Newton-Okounkov convex bodies, semigroups of integral points, graded algebras and intersection theory}.
Preprint: {\sf arXiv:0904.3350v1}

\bibitem[Kaveh-Khovanskii10]{Askold-Kiumars-reductive} Kaveh, K.; Khovanskii, A. G.
{\it Convex bodies associated to actions of reductive groups}. Preprint {\sf: arXiv:1001.4830v1}

\bibitem[Kazarnovskii87]{Kazarnovskii} Kazarnovskii, B. {\it Newton polyhedra and the
Bezout formula for matrix-valued functions of finite dimensional
representations}. Functional Analysis and its
applications, v. 21, no. 4, 73--74 (1987).

\bibitem[Khovanskii92]{Askold-sum-of-finite-sets}
Khovanskii, A.G. {\it The Newton polytope, the Hilbert polynomial and sums of finite sets}.
(Russian)  Funktsional. Anal. i Prilozhen.  26  (1992),  no. 4, 57--63, 96;
translation in  Funct. Anal. Appl.  26  (1992),  no. 4, 276--281.

\bibitem[Khovanskii-Pukhlikov93]{Kh-P} Pukhlikov, A. V.; Khovanskii, A. G.
{\it Finitely additive measures of virtual polyhedra}. Algebra i Analiz  4  (1992),  no. 2, 161--185;
translation in  St. Petersburg Math. J.  4  (1993),  no. 2, 337--356

\bibitem[Kiritchenko06]{Valentina1} Kiritchenko, V. {\it Chern classes of reductive groups and an adjunction formula}.
Ann. Inst. Fourier (Grenoble)  56  (2006),  no. 4, 1225--1256.

\bibitem[Kiritchenko07]{Valentina2} Kiritchenko, V. {\it On intersection indices of subvarieties in reductive groups}.
Mosc. Math. J.  7  (2007),  no. 3, 489--505, 575.

\bibitem[Kumar88]{Kumar}  Kumar, S. {\it Proof of the Parthasarathy-Ranga Rao-Varadarajan
conjecture}. Invent. Math. 93 (1988), no. 1, 117--130.

\bibitem[Kushnirenko76]{Kushnirenko} Ku\v{s}nirenko, A. G.
{\it Polyedres de Newton et nombres de Milnor}. (French)
Invent. Math. 32 (1976), no. 1, 1--31.

\bibitem[Littelmann98]{Littelmann} Littelmann, P. {\it Cones, crystals, and patterns}.
Transform. Groups 3 (1998),  no. 2, 145--179.

\bibitem[Okounkov97]{Okounkov-spherical} Okounkov, A.
{\it Note on the Hilbert polynomial of a spherical variety}.
Funktsional. Anal. i Prilozhen. 31 (1997), no. 2, 82--85;
translation in Funct. Anal. Appl. 31 (1997), no. 2, 138--140

\bibitem[PRV67]{PRV}  Parthasarathy, K. R.; Ranga Rao, R.; Varadarajan, V. S.
{\it Representations of complex semi-simple Lie groups and Lie algebras}. Ann. of Math. (2) 85 (1967) 383--429.

\bibitem[Timashev03]{Timashev} Timashev, D. A. {\it Equivariant compactifications of reductive groups}.
(Russian) Mat. Sb. 194 (2003), no. 4, 119--146; translation in Sb. Math. 194 (2003), no. 3-4, 589--616
\end{thebibliography}
\end{document}